\newtheorem{Theorem}{Theorem}[section]
\newtheorem{Corollary}[Theorem]{Corollary}
\newtheorem{Proposition}[Theorem]{Proposition}
\newtheorem{Question}[Theorem]{Question}
\def\fka{{\frak a}}
\def\fkm{{\frak m}}
\def\opn#1#2{\def#1{\operatorname{#2}}}
\opn\Spec{Spec}
\opn\Supp{Supp}
\opn\supp{supp}
\opn\Max{Max}
\opn\max{max}
\opn\Min{Min}
\opn\min{min}
\opn\Ass{Ass}
\opn\Assh{Assh}
\opn\Ann{Ann}
\opn\depth{depth}
\opn\rank{rank}
\opn\Mat{Mat}
\opn\Tot{Tot}
\opn\Sym{Sym}
\def\bsn{{\boldsymbol n}}
\opn\div{div}
\opn\Div{Div}
\opn\cl{cl}
\opn\Cl{Cl}
\opn\Ker{Ker}
\opn\Coker{Coker}
\opn\Im{Im}
\opn\Hom{Hom}
\opn\Tor{Tor}
\opn\Ext{Ext}
\opn\End{End}
\opn\Fitt{Fitt}
\opn\Aut{Aut}
\opn\id{id}
\opn\nat{nat}
\opn\pff{pf}
\opn\Pf{Pf}
\opn\GL{GL}
\opn\SL{SL}
\opn\G{G}
\opn\E{E}
\opn\H{H}
\opn\M{M}
\opn\mod{mod}
\opn\ord{ord}
\opn\det{det}
\opn\Soc{Soc}
\opn\chara{char}
\opn\length{\ell}
\opn\pd{pd}
\opn\rk{rk}
\opn\projdim{proj\,dim}
\opn\injdim{inj\,dim}
\opn\rank{rank}
\opn\depth{depth}
\opn\grade{grade}
\opn\height{ht}
\opn\embdim{emb\,dim}
\opn\codim{codim}
\renewcommand{\hat}{\widehat}
\title{A formula for the associated Buchsbaum-Rim multiplicities of a direct sum of cyclic modules II} 
\author{Futoshi Hayasaka}
\address{Department of Environmental and Mathematical Sciences, Okayama University, 
3-1-1 Tsushimanaka, Kita-ku, Okayama, 700-8530, JAPAN}
\email{hayasaka@okayama-u.ac.jp}
\keywords{Buchsbaum-Rim multiplicity, Buchsbaum-Rim function, cyclic modules, Hilbert-Samuel multiplicity}
\subjclass[2010]{Primary 13H15; Secondary 13P99}
\begin{document}

\maketitle

\begin{abstract}
The associated Buchsbaum-Rim multiplicities of a module are a descending sequence 
of non-negative integers. These invariants of a module are a generalization of the classical 
Hilbert-Samuel multiplicity of an ideal.   
In this article, we compute the associated Buchsbaum-Rim multiplicity of 
a direct sum of cyclic modules and give a formula for the second to last positive
Buchsbaum-Rim multiplicity in terms of the ordinary Buchsbaum-Rim and 
Hilbert-Samuel multiplicities. This is a natural generalization of a formula given 
by Kirby and Rees.  
\end{abstract}

\section{Introduction}

Let $(R, \fkm)$ be a Noetherian local ring with the maximal ideal $\fkm$ of dimension 
$d>0$. 
The associated Buchsbaum-Rim multiplicities of an $R$-module $C$ of finite length, 
which is denoted by $\{ e^j(C) \}_{0 \leq j \leq d+r-1}$, are 
a sequence of integers. These are invariants of $C$ introduced by 
Kleiman-Thorup \cite{KT2} and Kirby-Rees \cite{KR2} independently. 
For an $R$-module $C$ of finite length with a minimal free presentation 
$R^n \stackrel{\varphi}{\to} R^r \to C \to 0$, 
the multiplicities are defined by the so-called Buchsbaum-Rim function of two variables
$$\Lambda(p, q):={\ell}_R(S_{p+q}/M^{p}S_{q}), $$
where $S_p$ (resp. $M^p$) is a homogeneous component of 
degree $p$ of $S=\Sym_R(F)$ (resp. $R[M]=\Im \Sym_R(\varphi)$). 
The function $\Lambda(p, q)$ is eventually a polynomial of total degree $d+r-1$, and then
the associated Buchsbaum-Rim multiplicities are defined as 
for $j=0, 1, \dots , d+r-1$, 
$$e^j(C):=(\mbox{The coefficient of} \ p^{d+r-1-j}q^j \ \mbox{in the polynomial})\times (d+r-1-j)!j!. $$
These are a descending sequence of non-negative integers with $e^{r-1}(C)$ is positive, and  
$e^j(C)=0$ for $j \geq r$. This was proved by Kleiman-Thorup \cite{KT2} and Kirby-Rees \cite{KR2} independently. Moreover, they proved that the first multiplicity $e^0(C)$ coincides with 
the ordinary Buchsbaum-Rim multiplicity $e(C)$ of $C$ introduced in \cite{BR2}, 
which is the normalized leading coefficient of the polynomial function 
$\lambda(p)=\Lambda(p, 0)=\ell_R(S_p/M^p)$ of degree $d+r-1$ for $p \gg 0$. Namely,  
$$e(C) = e^0(C) \geq e^1(C) \geq \dots \geq e^{r-1}(C)>e^r(C)= \dots = e^{d+r-1}(C)=0. $$
Note that the ordinary Buchsbaum-Rim multiplicity $e(R/I)$ of a cyclic module defined by an
$\fkm$-primary ideal $I$ 
coincides with the classical Hilbert-Samuel multiplicity $e(I)$ of $I$. 
Thus, the ordinary Buchsbaum-Rim multiplicity $e^0(C)=e(C)$ and the associated one 
$e^j(C)$ are a generalization of the classical Hilbert-Samuel multiplicity. However, as 
compared to the classical Hilbert-Samuel multiplicity, the Buchsbaum-Rim multiplicities 
are not well-understood. 

There are some cases where the computation of the ordinary Buchsbaum-Rim multiplicity 
is possible (see \cite{Bi, CLU, J, KR1, KR2} for instance). In particular, in the case where  
$C$ is a direct sum of cyclic modules, there is an interesting relation 
between the ordinary Buchsbaum-Rim multiplicity and the mixed multiplicities of ideals. 
Let $I_1, \dots , I_r$ be $\fkm$-primary ideals in $R$. Then 
Kirby and Rees proved that

$$e(R/I_1 \oplus \dots \oplus R/I_r)=\sum_{\stackrel{i_1, \dots , i_r \geq 0}{i_1+\dots +i_r=d}}e_{i_1 \cdots i_r}(I_1, \dots , I_r), $$
where $e_{i_1 \cdots i_r}(I_1, \dots , I_r)$ is the mixed multiplicity of $I_1, \dots , I_r$ of type $(i_1, \dots , i_r)$ (see \cite{KR1, KR2} and also \cite{Bi}). Then we are interested in the other associated Buchsbaum-Rim multiplicities in this case. 

The starting point of this research is the following interesting formula which was also discovered by 
Kirby-Rees \cite{KR1, KR2}. Suppose that $I_1 \subset I_2 \subset \dots \subset I_r$. 
Then they proved that for any $j=1, \dots , r-1$, 
the $j$th Buchsbaum-Rim multiplicity can be 
expressed as the ordinary Buchsbaum-Rim multiplicity of a direct sum of $(r-j)$ 
cyclic modules defined by the last $(r-j)$ ideals: 
$$e^j(R/I_1 \oplus \dots \oplus R/I_r)=e(R/I_{j+1} \oplus \dots \oplus R/I_r).  $$
In particular, the last positive one $e^{r-1}$ can be expressed as the classical Hilbert-Samuel multiplicity $e(I_r)$ of the largest ideal: 
$$e^{r-1}(R/I_1 \oplus \dots \oplus R/I_r)=e(R/I_r). $$

Then it is natural to ask the formula for $e^j(R/I_1 \oplus \dots \oplus R/I_r)$
without the assumption $I_1 \subset I_2 \subset \dots \subset I_r$. 
However, as compared to the special case considered in \cite{KR1, KR2}, 
it seems that the problem is more complicated, and we need a different approach
to obtain the formula in general. 
Recently, we tried to compute the function $\Lambda(p, q)$ directly using some ideas and 
obtained the formula for $e^{r-1}(R/I_1 \oplus \dots \oplus R/I_r)$ without the assumption
$I_1 \subset \dots \subset I_r$. 
Indeed, we proved in our previous work \cite[Theorem 1.3]{Ha2} that 
for any $\fkm$-primary ideals $I_1, \dots , I_r$, the last positive Buchsbaum-Rim multiplicity 
can be expressed as the classical Hilbert-Samuel multiplicity $e(I_1+\dots +I_r)$ 
of the sum of all ideals: 
$$e^{r-1}(R/I_1 \oplus \dots \oplus R/I_r)=e(R/[I_1 + \dots + I_r]).  $$

The present purpose is to improve 
the method of computation given in \cite{Ha2} towards a formula for not only the last positive 
Buchsbaum-Rim multiplicity $e^{r-1}(R/I_1 \oplus \dots \oplus R/I_r)$ but also 
the next one $e^{r-2}(R/I_1 \oplus \dots \oplus R/I_r)$ in terms of the ordinary Buchsbaum-Rim and Hilbert-Samuel multiplicities. 
Here is the main result. 

\begin{Theorem}\label{main}
Let $I_1, \dots , I_r$ be arbitrary $\fkm$-primary ideals in $R$. Then we have a formula
$$e^{r-2}(R/I_1 \oplus \dots \oplus R/I_r)=E_{r-1}(I_1, \dots , I_r)-(d+1)(r-1)e(R/[I_1 + \dots + I_r]), $$
where $E_{r-1}(I_1, \dots , I_r)$ is a sum of the ordinary Buchsbaum-Rim multiplicities of two cyclic modules defined by the ideals $I_1+\dots +\hat{I_j}+\dots +I_r$ and $I_1+\dots+ I_r: $ 
$$E_{r-1}(I_1, \dots , I_r):=\sum_{j=1}^r e(R/[I_1+\dots +\hat{I_j}+\dots +I_r] 
\oplus R/[I_1+\dots+ I_r]). $$
\end{Theorem}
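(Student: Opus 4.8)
The plan is to compute the Buchsbaum-Rim function $\Lambda(p,q) = \ell_R(S_{p+q}/M^p S_q)$ directly for $C = R/I_1 \oplus \dots \oplus R/I_r$, refining the method of \cite{Ha2}. Here $S = \Sym_R(F)$ with $F = R^r$, and writing $F = \bigoplus_{i=1}^r R e_i$, the symmetric algebra $S$ is the polynomial ring $R[T_1, \dots, T_r]$, while the submodule $R[M] \subset S$ generated in degree one is $\bigoplus_{i=1}^r I_i T_i$. Thus $M^p S_q$ is spanned over $R$ by monomials $T_1^{a_1}\cdots T_r^{a_r}$ of total degree $p+q$ whose coefficient lies in $\prod_{i}I_i^{\,b_i}$, where $\sum b_i = p$ and $0 \le b_i \le a_i$; concretely the coefficient ideal attached to $T_1^{a_1}\cdots T_r^{a_r}$ is $\sum_{\sum b_i = p,\ b_i \le a_i} \prod_i I_i^{b_i}$. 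So $\Lambda(p,q) = \sum_{a_1 + \dots + a_r = p+q} \ell_R\!\left(R\big/\sum_{\sum b_i = p,\, b_i \le a_i}\prod_i I_i^{b_i}\right)$. The task is to extract the coefficient of $p^{d+r-1-j}q^j$ for $j = r-2$, i.e. the coefficient of $p^{d+1}q^{r-2}$, multiplied by $(d+1)!(r-2)!$.

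First I would organize the sum over exponent vectors $(a_1, \dots, a_r)$ with $|a| = p+q$ by how many of the $a_i$ are "small", since the term $\ell_R(R/\sum \prod I_i^{b_i})$ depends on which constraints $b_i \le a_i$ are binding. When all $a_i$ are large (at least $p$, say, which is the generic situation as $p,q \to \infty$ along the relevant scaling), the coefficient ideal is just $(I_1 + \dots + I_r)^p$, contributing $\ell_R(R/(I_1+\dots+I_r)^p)$; summing this over the $\binom{p+q+r-1}{r-1}$ lattice points and picking out the $p^{d+1}q^{r-2}$ coefficient recovers, via the leading term $\frac{e(J)}{d!}p^d$ with $J = I_1 + \dots + I_r$, a contribution proportional to $e(R/J)$. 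The correction terms come from lattice points where exactly one coordinate, say $a_j$, is "small" (of order $q$ rather than $p$): there the coefficient ideal becomes $\sum_{b_j \le a_j} I_j^{b_j}\big(\sum_{i \ne j} I_i\big)^{p - b_j}$, and one must analyze $\ell_R$ of $R$ modulo this. This is exactly where the two-ideal Buchsbaum-Rim multiplicity $e(R/[I_1+\dots+\hat I_j+\dots+I_r] \oplus R/[I_1+\dots+I_r])$ should appear: the module $R/K_j \oplus R/J$ with $K_j = \sum_{i \ne j}I_i$ has a free presentation $R^{\bullet} \to R^2$, and its Buchsbaum-Rim function is $\ell_R(R/\sum_{b+c = s, \, b \le a, \, c \le a'} K_j^b J^c)$-type sums — matching the structure of the coefficient ideal above after identifying the degree parameters. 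Configurations with two or more small coordinates contribute only to lower-degree terms in $q$ (degree $\le r-3$) and can be discarded.

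The key steps in order: (1) set up the monomial/coefficient-ideal description of $M^p S_q$ and write $\Lambda(p,q)$ as the lattice sum above; (2) stratify the lattice points by the set $\{i : a_i < p\}$ (or a cleaner threshold), showing that only strata of size $0$ and $1$ affect the $p^{d+1}q^{r-2}$ coefficient; (3) for the size-$0$ stratum, evaluate the sum of $\ell_R(R/J^p)$ over the remaining lattice region and extract its contribution, which produces the $-(d+1)(r-1)e(R/J)$ term (the combinatorial factor $(d+1)(r-1)$ arising from counting the "large" region against the full simplex, with a sign from inclusion-exclusion); (4) for each size-$1$ stratum indexed by $j$, identify the partial sum with (the polynomial part of) the Buchsbaum-Rim function of $R/K_j \oplus R/J$, invoke that $e^0 = e$ is its normalized leading coefficient, and collect the $p^{d+1}q^{r-2}$ coefficient as $e(R/K_j \oplus R/J)$; (5) sum over $j$ and reconcile the normalizing factorials to obtain $E_{r-1}(I_1, \dots, I_r) - (d+1)(r-1)e(R/J)$.

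The main obstacle I anticipate is step (3)–(4): controlling the cross-terms precisely enough. One must show that the "small $a_j$" contribution is captured, up to terms of $q$-degree $\le r-3$, by a genuine two-variable Buchsbaum-Rim polynomial, and simultaneously that the overlap between the size-$0$ estimate and the size-$1$ correction is accounted for without double counting — this is a delicate inclusion-exclusion over the boundary of the simplex $\{a_1 + \dots + a_r = p+q\}$. Bounding the error ideals (e.g. comparing $\sum_{b_j \le a_j}I_j^{b_j}K_j^{p-b_j}$ with $I_j^{a_j}K_j^{p-a_j}$ when $a_j$ is of order $q$, and with $K_j^p$ when $a_j \ge p$) and verifying that all discrepancies live in lower $q$-degree will require the kind of careful length estimates developed in \cite{Ha2}, now pushed one degree further. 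Extracting the exact rational constant $(d+1)(r-1)$ is then a bookkeeping computation with binomial coefficients that I would not grind through here, but whose shape is forced by the dimension count $\dim = d+r-1$ and the codimension-$1$ nature of the boundary strata.
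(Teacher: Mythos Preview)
Your proposal is correct and follows essentially the same approach as the paper: stratify $H_{p,q}$ by the number of coordinates exceeding $p$, show that only the top two strata (your size-$0$ and size-$1$, the paper's $H^{(r)}$ and $H^{(r-1)}$) contribute to the $q^{r-2}$ coefficient, identify these contributions with $\ell_R(R/J^p)$ and with the ordinary Buchsbaum--Rim functions of $R/K_j \oplus R/J$, and then read off the $p^{d+1}q^{r-2}$ coefficient. One minor correction: the constant $-(d+1)(r-1)$ does not come from an inclusion--exclusion against the full simplex but directly from expanding $\binom{q-(r-1)p-1}{r-1}\,\ell_R(R/J^p)$ in powers of $q$ and normalizing.
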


Let me illustrate the formula when $r=3$. Let $C=R/I_1\oplus R/I_2 \oplus R/I_3$. 
It is known that $e^0(C)$ coincides with the ordinary Buchsbaum-Rim multiplicity by \cite{KR2, KT2}, and $e^2(C)$ can be expressed as the ordinary Hilbert-Samuel multiplicity of the sum of all ideals by \cite{Ha2}. Theorem \ref{main} tells us that there is a similar expression for 
the remaining multiplicity $e^1(C)$. Namely, if we put 
$I_{123}:=I_1+I_2+I_3$ and $I_{ij}:=I_i+I_j$ for $1 \leq i < j \leq 3$, then we can write 
all the multiplicities in terms of ordinary Buchsbaum-Rim multiplicities and hence 
mixed multiplicities.

\begin{align*}
e^0(C) &= e(R/I_1\oplus R/I_2 \oplus R/I_3)  & \\
e^1(C) &=e(R/I_{23} \oplus R/I_{123})
+e(R/I_{13} \oplus R/I_{123})
+e(R/I_{12} \oplus R/I_{123}) 
-2(d+1) e(R/I_{123}) & \\ 
e^2(C) &= e(R/I_{123}) .  & \\
\end{align*}

Our formula can be viewed as a natural generalization 
of the above mentioned 
Kirby-Rees formula for $e^{r-2}(R/I_1 \oplus \dots \oplus R/I_r)$
in a special case where $I_1 \subset I_2 \subset \dots \subset I_r$. 
Indeed, as an immediate consequence of Theorem \ref{main}, we get the following. 

\setcounter{section}{4}
\setcounter{Theorem}{1}
\begin{Corollary}\label{cor}
Let $I_1, \dots , I_r$ be $\fkm$-primary ideals in $R$ and assume that 
$I_1, \dots , I_{r-1} \subset I_r$, that is, the ideal $I_r$ is the largest 
ideal. Then we have a formula
$$e^{r-2}(R/I_1 \oplus \dots \oplus R/I_r)=e(R/[I_1+\dots +I_{r-1}] \oplus R/I_r). $$
In particular, if $I_1 \subset I_2 \subset \dots \subset I_r$, then 
$$e^{r-2}(R/I_1 \oplus \dots  \oplus R/I_r)=e(R/I_{r-1} \oplus R/I_r). $$
\end{Corollary}
\setcounter{section}{1}

The contents of the article are organized as follows. 
In the next section 2, we will recall some necessary notation and results 
from our previous work \cite{Ha2}. In section 3, 
we will compute the Buchsbaum-Rim function of two variables by improving the method in \cite{Ha2}. 
In the last section 4, we will give a proof of Theorem \ref{main} and its consequence 
Corollary \ref{cor}.  
We will also discuss the remaining multiplicities $e^j(C)$ 
for $j=1, \dots , r-3$. 

Throughout this article, we will work in the same manner in our previous work \cite{Ha2}. 
Let $(R, \fkm)$ be a Noetherian local ring with the maximal ideal 
$\fkm$ of dimension $d>0$. Let $r>0$ be a fixed positive integer and let 
$[r]=\{1, \dots , r\}$. For a finite set $A$, 
${}^{\sharp} A$ denotes the number of elements of $A$. 
Vectors are always written in bold-faced letters, e.g., $\boldsymbol i =(i_1, \dots , i_r)$. 
We work in the usual multi-index notation. Let $I_1, \dots , I_r$ be ideals in $R$ and 
let $t_1, \dots , t_r$ be indeterminates. 
Then for a vector $\boldsymbol i =(i_1, \dots , i_r) \in \mathbb Z_{\geq 0}^r$, 
we denote 
$\boldsymbol I^{\boldsymbol i}=I_1^{i_1} \cdots I_r^{i_r}, \boldsymbol t^{\boldsymbol i}=t_1^{i_1} 
\cdots t_r^{i_r}$ and $| \boldsymbol i | =i_1+ \dots + i_r$. 
For vectors $\boldsymbol a, \boldsymbol b \in \mathbb Z^r$, 
$\boldsymbol a \geq \boldsymbol b \stackrel{{\rm def}}{\Leftrightarrow} 
a_i \geq b_i \ \mbox{for all} \ i=1, \dots , r.$
Let $\boldsymbol 0=(0, \dots , 0)$ be the zero vector in $\mathbb Z_{\geq 0}^r$. 
By convention, empty sum is defined to be zero. 

\section{Preliminaries}
In this section, we give a few elementary facts to compute the associated Buchsbaum-Rim multiplicities. 
See also \cite[section 2]{Ha2} for the related facts and the details. 

In what follows, let $I_1, \dots , I_r$ be $\fkm$-primary ideals in $R$ and 
let $C=R/I_1 \oplus \dots \oplus R/I_r$. Let $S=R[t_1, \dots , t_r]$ be a polynomial ring over $R$ 
and let $R[M]=R[I_1t_1, \dots , I_rt_r]$ be 
the multi-Rees algebra of $I_1, \dots , I_r$. Let 
$S_p$ (resp. $M^p$) be a homogeneous component of 
degree $p$ of $S$ (resp. $R[M]$).
Then it is easy to see that the function $\Lambda(p, q)$ can be expressed as 
$${\displaystyle \Lambda(p, q) = \sum_{\boldsymbol n \in H_{p,q}} 
\ell_R(R/J_{p, q}({\boldsymbol n})) }$$
where $H_{p, q}:=\{ \boldsymbol n \in \mathbb Z_{\geq 0}^r \mid |\boldsymbol n |=p+q \}$ and 
${\displaystyle J_{p,q}({\boldsymbol n}):=\sum_
{\substack{| \boldsymbol i|=p \\ \boldsymbol 0 \leq \boldsymbol i \leq \boldsymbol n}} \boldsymbol I^
{\boldsymbol i}
}$ for $\boldsymbol n \in H_{p, q}$. 
For a subset $\Delta \subset H_{p, q}$, we set 
$$\Lambda_{\Delta}(p, q):=\sum_{\boldsymbol n \in \Delta} 
\ell_R(R/J_{p, q}({\boldsymbol n})). $$
As in \cite{Ha2}, we consider the following special subsets of $H_{p, q}$, 
which will play a basic role in our computation of $\Lambda(p, q)$. 
For $p, q>0$ and $k=1, \dots , r$, let 
$$\Delta_{p, q}^{(k)}:=\{\boldsymbol n \in H_{p, q} \mid n_1, \dots , n_k>p, n_{k+1}+ \dots + n_r \leq p \}. $$
Then the function $\Lambda_{\Delta_{p, q}^{(k)}}(p, q)$ can be described explicitly as follows. 

\

\begin{Proposition}\label{2.1} $($\cite[Proposition 2.3]{Ha2}$)$
Let $p, q>0$ with $q \geq (p+1)r$ and let $k=1, \dots , r$. Then 
$$\Lambda_{\Delta_{p, q}^{(k)}}(p, q)=\sum_{\stackrel{n_{k+1}, \dots , n_r \geq 0}{n_{k+1}+ \dots +n_r \leq p}} 
{q-(k-1)p-1-(n_{k+1}+\dots +n_r) \choose k-1} \ell_R(R/\fka), $$
where $\fka$ is an ideal depending on $n_{k+1}, \dots , n_r: $  
$$\displaystyle{
\fka:=(I_1+\dots +I_k)^{p-(n_{k+1}+\dots +n_r)} \prod_{j=k+1}^r(I_1+\dots +I_k+I_j)^{n_j}}. $$ 
\end{Proposition}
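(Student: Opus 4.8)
The plan is to compute $\Lambda_{\Delta_{p,q}^{(k)}}(p,q)$ directly from the definition $\Lambda_\Delta(p,q)=\sum_{\boldsymbol n\in\Delta}\ell_R(R/J_{p,q}(\boldsymbol n))$, by first identifying which ideal $J_{p,q}(\boldsymbol n)$ occurs on the set $\Delta_{p,q}^{(k)}$, and then counting with multiplicity how often each such ideal appears. Fix $\boldsymbol n=(n_1,\dots,n_r)\in\Delta_{p,q}^{(k)}$, so that $n_1,\dots,n_k>p$ while $n_{k+1}+\dots+n_r\le p$. The key observation I would establish first is that for such $\boldsymbol n$, the ideal $J_{p,q}(\boldsymbol n)=\sum_{|\boldsymbol i|=p,\ \boldsymbol 0\le\boldsymbol i\le\boldsymbol n}\boldsymbol I^{\boldsymbol i}$ depends only on the tail $(n_{k+1},\dots,n_r)$ and equals the ideal $\fka$ in the statement. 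Indeed, the constraint $\boldsymbol 0\le\boldsymbol i\le\boldsymbol n$ with $|\boldsymbol i|=p$ is, in the first $k$ coordinates, no constraint at all (since $n_j>p\ge i_j$ there), while in the last $r-k$ coordinates it forces $i_j\le n_j$; summing the monomial ideals $\boldsymbol I^{\boldsymbol i}$ over all such exponent vectors and using the standard identity $\sum_{a_1+\dots+a_k=s}I_1^{a_1}\cdots I_k^{a_k}=(I_1+\dots+I_k)^s$, one gets exactly $(I_1+\dots+I_k)^{p-(n_{k+1}+\dots+n_r)}\prod_{j=k+1}^r(I_1+\dots+I_k+I_j)^{n_j}$. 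This is the conceptual heart of the argument and the step I expect to require the most care, particularly checking that the cross terms $I_j^{n_j}$ for $j>k$ combine with the $I_1+\dots+I_k$ part to give the factor $(I_1+\dots+I_k+I_j)^{n_j}$ rather than something larger; the hypothesis $n_{k+1}+\dots+n_r\le p$ is what guarantees the exponent $p-(n_{k+1}+\dots+n_r)$ on the first factor is nonnegative and that there is ``room'' to realize all the needed exponent vectors.

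Next I would organize the sum $\Lambda_{\Delta_{p,q}^{(k)}}(p,q)=\sum_{\boldsymbol n\in\Delta_{p,q}^{(k)}}\ell_R(R/\fka)$ by fixing the tail $\boldsymbol m=(n_{k+1},\dots,n_r)$ with $m_{k+1}+\dots+m_r\le p$ and counting the number of admissible heads $(n_1,\dots,n_k)$. An element of $\Delta_{p,q}^{(k)}$ with this tail is determined by a choice of $n_1,\dots,n_k$ with each $n_i>p$ (equivalently $n_i\ge p+1$) and $n_1+\dots+n_k=(p+q)-(m_{k+1}+\dots+m_r)$. Writing $n_i=p+1+n_i'$ with $n_i'\ge 0$, this becomes $n_1'+\dots+n_k'=q-(k-1)p-1-(m_{k+1}+\dots+m_r)$; the number of nonnegative integer solutions is the binomial coefficient $\binom{q-(k-1)p-1-(m_{k+1}+\dots+m_r)+k-1}{k-1}=\binom{q-(k-1)p-1-(m_{k+1}+\dots+m_r)+k-1}{k-1}$, which simplifies to the coefficient $\binom{q-(k-1)p-1-(m_{k+1}+\dots+m_r)}{k-1}$ appearing in the statement after absorbing the shift — here I would double-check the arithmetic, since off-by-one errors in the ``stars and bars'' count and in the reindexing $n_i\mapsto n_i'$ are the obvious pitfall. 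The hypothesis $q\ge (p+1)r$ is used precisely to guarantee that $q-(k-1)p-1-(m_{k+1}+\dots+m_r)\ge 0$ for all admissible tails (so the binomial coefficient is the genuine count and not zero or negative), since $m_{k+1}+\dots+m_r\le p$ gives $q-(k-1)p-1-(m_{k+1}+\dots+m_r)\ge q-(k-1)p-1-p=q-kp-1\ge (p+1)r-kp-1\ge (p+1)k-kp-1=k-1\ge 0$.

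Assembling the two steps gives
\[
\Lambda_{\Delta_{p,q}^{(k)}}(p,q)=\sum_{\substack{n_{k+1},\dots,n_r\ge 0\\ n_{k+1}+\dots+n_r\le p}}\binom{q-(k-1)p-1-(n_{k+1}+\dots+n_r)}{k-1}\ell_R(R/\fka),
\]
which is the claimed formula. I would close by noting the degenerate case $k=1$ separately if needed (there the binomial coefficient is $\binom{\,\cdot\,}{0}=1$ and the head consists of the single coordinate $n_1$ determined by the tail), to make sure the boundary of the induction-free computation is clean. In summary: identify $J_{p,q}(\boldsymbol n)=\fka$ via the monomial-ideal identity (main obstacle), count admissible heads by stars and bars (arithmetic to be done carefully), and use $q\ge(p+1)r$ only to keep all binomial coefficients in their valid range.
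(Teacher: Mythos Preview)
The paper does not prove this proposition; it is quoted verbatim from \cite[Proposition 2.3]{Ha2}, so there is no in-paper argument to compare against. Your approach---identify $J_{p,q}(\boldsymbol n)$ on $\Delta_{p,q}^{(k)}$ and then count heads by stars and bars---is correct and is exactly the natural direct computation one expects in \cite{Ha2}.

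Two small points. First, your intermediate stars-and-bars line contains an arithmetic slip (and a duplicated expression): after the substitution $n_i=n_i'+p+1$ the correct target is $n_1'+\dots+n_k'=q-(k-1)p-k-(n_{k+1}+\dots+n_r)$, and only after adding $k-1$ for the stars-and-bars count do you land on $\binom{q-(k-1)p-1-(n_{k+1}+\dots+n_r)}{k-1}$. You flag this yourself, but it is worth writing out cleanly since it is the only place an error could creep in. Second, the verification that $J_{p,q}(\boldsymbol n)=\fka$ is a straightforward ideal identity: writing $J=I_1+\dots+I_k$ and using $(J+I_j)^{n_j}=\sum_{i_j=0}^{n_j}J^{n_j-i_j}I_j^{i_j}$, the product $J^{p-\sum_{j>k}n_j}\prod_{j>k}(J+I_j)^{n_j}$ expands termwise to $\sum_{0\le i_j\le n_j}J^{\,p-\sum_{j>k}i_j}\prod_{j>k}I_j^{i_j}$, which matches $J_{p,q}(\boldsymbol n)$ once you note the constraints $i_1,\dots,i_k\le n_1,\dots,n_k$ are vacuous and $\sum_{j>k}i_j\le\sum_{j>k}n_j\le p$ keeps all exponents nonnegative. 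With these two checks written out, your proof is complete.
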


Here we make a slightly different description of the above mentioned basic functions 
$\Lambda_{\Delta_{p, q}^{(k)}}(p, q)$. 
To state it, we first recall some elementary facts about the ordinary
Buchsbaum-Rim functions and multiplicities of a direct sum of cyclic modules. 
The ordinary Buchsbaum-Rim function $\lambda(p)$ of $C=R/I_1 \oplus \dots \oplus R/I_r$
(we will often denote it $\lambda_C(p)$ to emphasize the defining module $C$) can be 
expressed as follows: 
\begin{eqnarray*}
\lambda(p)&=&\ell_R(S_p/M^p)\\
&=&\sum_
{\substack{\boldsymbol i \geq \boldsymbol 0 \\ | \boldsymbol i|=p}} 
\ell_R(R/\boldsymbol I^
{\boldsymbol i}) \\
&=&\sum_
{\substack{\boldsymbol i \geq \boldsymbol 0 \\ | \boldsymbol i|=p}} 
\ell_R(R/I_1^{i_1} \cdots I_r^{i_r}). 
\end{eqnarray*}
In particular, if we consider the case where $I_1=\dots = I_r=:I$, then 
$$\lambda(p)={p+r-1 \choose r-1}\ell_R(R/I^p). $$
The function $\ell_R(R/I^p)$ is just the Hilbert-Samuel function of $I$ so that 
it is a polynomial for all large enough $p$, and one can write 
$$\ell_R(R/I^p)=\frac{e(R/I)}{d!}p^d+(\mbox{lower terms}), $$
where $e(R/I)$ is the usual Hilbert-Samuel multiplicity of $I$. 
Therefore, the ordinary Buchsbaum-Rim function can be expressed as
$$\lambda(p)=\frac{e(R/I)}{d!(r-1)!}p^{d+r-1}+(\mbox{lower terms}). $$
This implies the following elementary formula for 
the ordinary Buchsbaum-Rim multiplicity: 
\begin{equation}\label{ordinaryBuchsbaum-Rim}
e(C)=e(\underbrace{R/I \oplus \dots \oplus R/I}_{r})={d+r-1 \choose r-1}e(R/I). 
\end{equation}

Now, let me give another description of $\Lambda_{\Delta_{p, q}^{(k)}}(p, q)$.

\begin{Proposition}\label{2.2}
Let $p, q>0$ with $q \geq (p+1)r$ and let $k=1, \dots , r$. Then 
\begin{multline*}
\Lambda_{\Delta_{p, q}^{(k)}}(p, q)=
{q-(k-1)p-1 \choose k-1}\lambda_{L_k}(p) \\
-\sum_{\stackrel{n_{k+1}, \dots , n_r \geq 0}{n_{k+1}+ \dots +n_r \leq p}} 
\sum_{i=0}^{n_{k+1}+\dots +n_r-1}{q-(k-1)p-2-i \choose k-2} \ell_R(R/\fka),  
\end{multline*}
where $\displaystyle{L_k:=R/[I_1+\dots +I_k] \oplus \bigoplus_{j=k+1}^{r}R/[I_1+\dots +I_k+I_j]}$ 
is a direct sum of $(r-k+1)$ cyclic modules and 
$\displaystyle{\fka:=(I_1+\dots +I_k)^{p-(n_{k+1}+\dots +n_r)} \prod_{j=k+1}^r(I_1+\dots +I_k+I_j)^{n_j}}$ is an ideal depending on $n_{k+1}, \dots , n_r$. 
\end{Proposition}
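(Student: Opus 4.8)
The plan is to derive Proposition~\ref{2.2} directly from Proposition~\ref{2.1} by re-expressing the binomial-weighted sum in a more structured way. Starting from the formula
$$\Lambda_{\Delta_{p, q}^{(k)}}(p, q)=\sum_{\substack{n_{k+1}, \dots , n_r \geq 0 \\ n_{k+1}+ \dots +n_r \leq p}}
{q-(k-1)p-1-(n_{k+1}+\dots +n_r) \choose k-1} \ell_R(R/\fka),$$
the key observation is the elementary binomial identity
$${q-(k-1)p-1-s \choose k-1}={q-(k-1)p-1 \choose k-1}-\sum_{i=0}^{s-1}{q-(k-1)p-2-i \choose k-2},$$
valid for $s=n_{k+1}+\dots+n_r\geq 0$ (this is just the telescoping consequence of Pascal's rule ${a \choose k-1}-{a-1 \choose k-1}={a-1 \choose k-2}$, summed over the range where we decrease the top argument from $q-(k-1)p-1$ down to $q-(k-1)p-1-s$). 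The hypothesis $q\geq(p+1)r$ guarantees that $q-(k-1)p-1-s$ stays large enough (in particular nonnegative) throughout, so all these binomial coefficients are the genuine polynomial values and the identity holds without sign subtleties.

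Substituting this identity into the sum over $n_{k+1},\dots,n_r$ splits $\Lambda_{\Delta_{p,q}^{(k)}}(p,q)$ into two pieces. The second piece is precisely the double sum appearing in the statement of Proposition~\ref{2.2}, so nothing further is needed there. For the first piece, I would pull out the factor ${q-(k-1)p-1 \choose k-1}$, which does not depend on $n_{k+1},\dots,n_r$, leaving
$${q-(k-1)p-1 \choose k-1}\sum_{\substack{n_{k+1}, \dots , n_r \geq 0 \\ n_{k+1}+ \dots +n_r \leq p}}\ell_R(R/\fka),$$
and the remaining task is to identify this inner sum with $\lambda_{L_k}(p)$, the ordinary Buchsbaum-Rim function of $L_k=R/[I_1+\dots+I_k]\oplus\bigoplus_{j=k+1}^r R/[I_1+\dots+I_k+I_j]$.

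This identification is where the real content lies. Using the formula $\lambda_{L_k}(p)=\sum_{|\boldsymbol i|=p,\,\boldsymbol i\geq\boldsymbol 0}\ell_R(R/\boldsymbol J^{\boldsymbol i})$ recalled just before the proposition, where now $\boldsymbol J=(J_0,J_{k+1},\dots,J_r)$ with $J_0=I_1+\dots+I_k$ and $J_j=I_1+\dots+I_k+I_j$, an exponent vector $\boldsymbol i$ of total degree $p$ is a tuple $(i_0,i_{k+1},\dots,i_r)$ with $i_0+i_{k+1}+\dots+i_r=p$. Setting $n_j:=i_j$ for $j=k+1,\dots,r$ and noting that $i_0$ is then forced to be $p-(n_{k+1}+\dots+n_r)$, the condition $i_0\geq 0$ becomes exactly $n_{k+1}+\dots+n_r\leq p$, and the ideal $\boldsymbol J^{\boldsymbol i}=J_0^{i_0}\prod_{j=k+1}^r J_j^{i_j}$ becomes exactly the ideal $\fka=(I_1+\dots+I_k)^{p-(n_{k+1}+\dots+n_r)}\prod_{j=k+1}^r(I_1+\dots+I_k+I_j)^{n_j}$ from Proposition~\ref{2.1}. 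Hence the bijection between exponent vectors for $L_k$ and tuples $(n_{k+1},\dots,n_r)$ in the summation range is transparent, and the inner sum is $\lambda_{L_k}(p)$ on the nose.

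I expect no serious obstacle: the proof is a bookkeeping argument combining one binomial identity with the explicit combinatorial description of $\lambda_{L_k}(p)$. The only point requiring mild care is checking that the range conditions match up correctly (the $i_0\geq 0$ versus $n_{k+1}+\dots+n_r\leq p$ correspondence, and confirming via $q\geq(p+1)r$ that every binomial coefficient invoked in the telescoping identity is evaluated in its polynomial range so that the identity is a genuine equality of integers rather than merely of formal polynomials). Once those are in place, substituting back assembles the two pieces into exactly the displayed formula.
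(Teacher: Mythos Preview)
Your proposal is correct and follows essentially the same approach as the paper: both start from Proposition~\ref{2.1}, apply the same telescoping binomial identity (the paper states it as $\binom{m-\ell}{n}=\binom{m}{n}-\sum_{i=0}^{\ell-1}\binom{m-\ell+i}{n-1}$ and then reindexes), and then identify the resulting unweighted sum with $\lambda_{L_k}(p)$. The only cosmetic difference is that the paper treats $k=1$ separately before invoking the identity, whereas you handle all $k$ uniformly; your version is slightly cleaner in that respect.
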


\begin{proof}
The case where $k=1$ follows from Proposition \ref{2.1}. Indeed, 
\begin{eqnarray*}
\Lambda_{\Delta_{p, q}^{(1)}}(p, q) &=&
\sum_{\stackrel{n_{2}, \dots , n_r \geq 0}{n_{2}+ \dots +n_r \leq p}} \ell_R\Big(
R \big/I_1^{p-(n_2+\dots +n_r)} \prod_{j=2}^r (I_1+I_j)^{n_j} \Big) \\
&=& \sum_{\substack{\boldsymbol i \geq \boldsymbol 0 \\ | \boldsymbol i|=p}} \ell_R
\big( R/I_1^{i_1} (I_1+I_2)^{i_2} \cdots (I_1+I_r)^{i_r} \big) \\
&=&\lambda_{L_1}(p). 
\end{eqnarray*}
Suppose that $k \geq 2$. 
By using an elementary combinatorial formula ${m-\ell \choose n}={m \choose n}-\sum_{i=0}^{\ell-1}
{m-\ell+i \choose n-1}$, one can see that 
\begin{eqnarray*}
&&{q-(k-1)p-1-(n_{k+1}+\dots +n_r) \choose k-1}\\
&=&{q-(k-1)p-1 \choose k-1}-\sum_{j=0}^{n_{k+1}+\dots +n_r-1}{q-(k-1)p-1-(n_{k+1}+\dots+ n_r)+j 
\choose k-2} \\
&=&{q-(k-1)p-1 \choose k-1}-\sum_{j=0}^{n_{k+1}+\dots +n_r-1}{q-(k-1)p-2+j-(n_{k+1}+\dots +n_r-1) 
\choose k-2} \\
&=&{q-(k-1)p-1 \choose k-1}-\sum_{i=0}^{n_{k+1}+\dots +n_r-1}{q-(k-1)p-2-i 
\choose k-2}.  
\end{eqnarray*}
By Proposition \ref{2.1}, we can write the function $\Lambda_{\Delta_{p, q}^{(k)}}(p, q)$ 
as follows: 
\begin{eqnarray*}
&&\Lambda_{\Delta_{p, q}^{(k)}}(p, q)\\
&=&\sum_{\stackrel{n_{k+1}, \dots , n_r \geq 0}{n_{k+1}+ \dots +n_r \leq p}} 
{q-(k-1)p-1-(n_{k+1}+\dots +n_r) \choose k-1} \ell_R(R/\fka)\\
&=&\sum_{\stackrel{n_{k+1}, \dots , n_r \geq 0}{n_{k+1}+ \dots +n_r \leq p}} 
\left[ {q-(k-1)p-1 \choose k-1}-\sum_{i=0}^{n_{k+1}+\dots +n_r-1}{q-(k-1)p-2-i
\choose k-2} \right] \ell_R(R/\fka)\\
\end{eqnarray*}
\begin{eqnarray*}
&=&
{q-(k-1)p-1 \choose k-1} \sum_{\stackrel{n_{k+1}, \dots , n_r \geq 0}{n_{k+1}+ \dots +n_r \leq p}}
\ell_R(R/\fka) \\
&&\hspace{4cm}
-\sum_{\stackrel{n_{k+1}, \dots , n_r \geq 0}{n_{k+1}+ \dots +n_r \leq p}}
\sum_{i=0}^{n_{k+1}+\dots +n_r-1}{q-(k-1)p-2-i \choose k-2} \ell_R(R/\fka) \\
&=&{q-(k-1)p-1 \choose k-1} \lambda_{L_k}(p) \\
&&\hspace{4cm}
-\sum_{\stackrel{n_{k+1}, \dots , n_r \geq 0}{n_{k+1}+ \dots +n_r \leq p}}
\sum_{i=0}^{n_{k+1}+\dots +n_r-1}{q-(k-1)p-2-i \choose k-2} \ell_R(R/\fka), \\
\end{eqnarray*}
where 
$\displaystyle{L_k:=R/[I_1+\dots +I_k] \oplus \bigoplus_{j=k+1}^{r}R/[I_1+\dots +I_k+I_j]}$ 
is a direct sum of $(r-k+1)$ cyclic modules and 
$\displaystyle{\fka:=(I_1+\dots +I_k)^{p-(n_{k+1}+\dots +n_r)} \prod_{j=k+1}^r(I_1+\dots +I_k+I_j)^{n_j}}$
 is an ideal depending on $n_{k+1}, \dots , n_r$. 
\end{proof}

\section{A computation of the Buchsbaum-Rim functions}
In this section, we compute the function $\Lambda(p,q)$ by improving the method in \cite{Ha2}
towards a formula for $e^{r-2}(R/I_1\oplus \dots \oplus R/I_r)$. 
The notation we will use here is under the same manner in \cite{Ha2}. 
See also \cite[Section 3]{Ha2} for more detailed observations. 

In order to compute the multiplicity defined by the asymptotic function 
$\Lambda(p, q)$, 
we may assume that 
$q \geq (p+1)r \gg 0.$ 
In what follows, let $p, q$ be fixed integers satisfying $q \geq (p+1)r \gg 0$. 
We put $H:=H_{p, q}$ for short. 
Then the set $H$ can be divided by $r$-regions 
$$H=\coprod_{k=1}^r H^{(k)}, $$
where $H^{(k)}:=\{ \boldsymbol n \in H \mid {}^{\sharp} \{ i \mid n_i > p \}=k \}. $
Moreover, we divide each $H^{(k)}$ into ${r \choose k}$-regions 
$$H^{(k)}=\coprod_{\stackrel{A \subset [r]}{{}^{\sharp}A=r-k}} D_A^{(k)}, $$
where $D_A^{(k)}:=\{ \bsn \in H^{(k)} \mid n_i >p \ \mbox{for} \ i \notin A, n_i \leq p \ \mbox{for} \ i \in A \}$ and $D_{\emptyset}^{(r)}=H^{(r)}$. 
Then 
$$H=\coprod_{k=1}^r \coprod_{\stackrel{A \subset [r]}{{}^{\sharp}A=r-k}} D_A^{(k)}. $$

Let me illustrate this decomposition when $r=3$. 
Figure \ref{pic1} below 
is the picture where $H^{(3)}=D_{\emptyset}^{(3)}$ is the region of the pattern of dots,  
$H^{(2)}=D_{\{1\}}^{(2)} \coprod D_{\{2\}}^{(2)} \coprod D_{\{3\}}^{(2)}$ is the region of no pattern, 
and $H^{(1)}=D_{\{1, 2\}}^{(1)} \coprod D_{\{1, 3\}}^{(1)} \coprod D_{\{2, 3\}}^{(1)}$ is the region
of lines.

\begin{figure}[h]

\includegraphics[clip, trim=30 500 0 80]{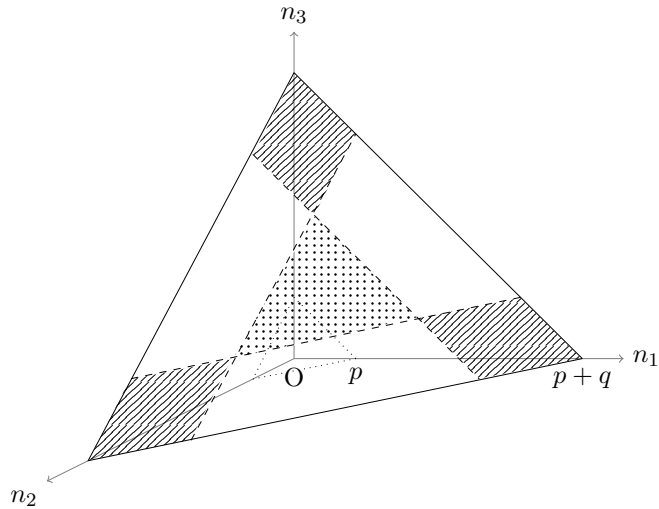}


\caption{A decomposition of $H$ when $r=3$}\label{pic1}

\end{figure}

Therefore, the computation of $\Lambda(p, q)$ can be reduced to the one of 
each $\Lambda_{D_A^{(k)}} (p, q)$:
\begin{eqnarray*}
\Lambda(p, q)&=&\sum_{k=1}^r \Lambda_{H^{(k)}}(p, q)\\
&=&\sum_{k=1}^r  \sum_{\stackrel{A \subset [r]}{{}^{\sharp}A=r-k}} \Lambda_{D_A^{(k)}} (p, q). 
\end{eqnarray*}
When $k=r$, $D_{\emptyset}^{(r)}=H^{(r)}=\Delta_{p, q}^{(r)}$ so that 
we get the explicit description of $\Lambda_{H^{(r)}}(p, q)$ by Proposition \ref{2.2}. 
Similarly, when $k=r-1$, $D_{\{r\}}^{(r-1)}=\Delta_{p, q}^{(r-1)}$ so that 
we get the explicit description of $\Lambda_{D_{\{r\}}^{(r-1)}}(p, q)$
 by Proposition \ref{2.2} and hence the one of $\Lambda_{H^{(r-1)}}(p, q)$.

\begin{Proposition}\label{3.1} We have the following description of $\Lambda_{H^{(k)}}(p, q)$
when $k=r, r-1$.  
\begin{enumerate}
\item The case where $k=r: $  
$$\Lambda_{H^{(r)}} (p, q)={q-(r-1)p-1 \choose r-1} \lambda_{L}(p), $$
where $L:=R/[I_1+\dots +I_r]$ is a cyclic module. 
\item The case where $k=r-1: $  
\begin{multline*}
\Lambda_{H^{(r-1)}} (p, q)={q-(r-2)p-1 \choose r-2} \sum_{j=1}^r \lambda_{L_j}(p)  \\
-\sum_{j=1}^r  \sum_{n=0}^p \sum_{i=0}^{n-1} {q-(r-2)p-2-i \choose r-3} \ell_R(R/\fka_j(n))
\end{multline*}
where
$L_j:=R/[I_1+\dots +\hat{I_j}+ \dots +I_r] \oplus R/[I_1+\dots +I_r]$ is a direct sum of two cyclic modules
and 
$\displaystyle{\fka_j(n):=(I_1+\dots +\hat{I_j}+ \dots +I_r)^{p-n} (I_1+\dots +I_r)^n}$
is an ideal depending on $j$ and $n$.
\end{enumerate} 
\end{Proposition}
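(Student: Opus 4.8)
The plan is to identify each region $H^{(k)}$ with $k=r$ or $k=r-1$ with one of the sets $\Delta_{p,q}^{(\bullet)}$ (possibly after a permutation of the indices) and then invoke Proposition \ref{2.2}. First I would treat part (1). By definition, $H^{(r)}=D_\emptyset^{(r)}=\{\boldsymbol n\in H\mid n_i>p \text{ for all } i\}=\Delta_{p,q}^{(r)}$, since the condition ``$n_{r+1}+\dots+n_r\le p$'' is an empty sum and hence automatically satisfied. Applying Proposition \ref{2.2} with $k=r$, the double sum indexed by $n_{r+1},\dots,n_r$ is empty, so only the first term survives: $\Lambda_{H^{(r)}}(p,q)=\binom{q-(r-1)p-1}{r-1}\lambda_{L_r}(p)$, and $L_r=R/[I_1+\dots+I_r]$ is exactly the cyclic module $L$ in the statement. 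This gives (1) immediately.

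For part (2), I would first handle the ``last coordinate small'' piece $D_{\{r\}}^{(r-1)}$. By definition this is $\{\boldsymbol n\in H\mid n_1,\dots,n_{r-1}>p,\ n_r\le p\}$, which is precisely $\Delta_{p,q}^{(r-1)}$. Proposition \ref{2.2} with $k=r-1$ then yields
\begin{multline*}
\Lambda_{D_{\{r\}}^{(r-1)}}(p,q)=\binom{q-(r-2)p-1}{r-2}\lambda_{L_{r-1}}(p)\\
-\sum_{n=0}^{p}\sum_{i=0}^{n-1}\binom{q-(r-2)p-2-i}{r-3}\ell_R(R/\fka_r(n)),
\end{multline*}
where $L_{r-1}=R/[I_1+\dots+I_{r-1}]\oplus R/[I_1+\dots+I_r]$ (the module ``$L_j$'' of the statement with $j=r$) and $\fka_r(n)=(I_1+\dots+I_{r-1})^{p-n}(I_1+\dots+I_r)^n$. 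Next I would observe that the whole construction in Section 2 is symmetric under permutations of the indices $1,\dots,r$: for each $j\in[r]$, applying the permutation that swaps $r$ and $j$ to the ideals shows that $D_{\{j\}}^{(r-1)}=\{\boldsymbol n\in H\mid n_i>p\text{ for }i\ne j,\ n_j\le p\}$ contributes the same expression with $I_r$ replaced by $I_j$ throughout; that is, $\Lambda_{D_{\{j\}}^{(r-1)}}(p,q)$ equals the displayed formula with $L_{r-1}$ replaced by $L_j=R/[I_1+\dots+\hat{I_j}+\dots+I_r]\oplus R/[I_1+\dots+I_r]$ and $\fka_r(n)$ replaced by $\fka_j(n)=(I_1+\dots+\hat{I_j}+\dots+I_r)^{p-n}(I_1+\dots+I_r)^n$. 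Finally I would sum over $j=1,\dots,r$, using $H^{(r-1)}=\coprod_{j=1}^{r}D_{\{j\}}^{(r-1)}$, to obtain the stated formula for $\Lambda_{H^{(r-1)}}(p,q)$.

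The only genuinely substantive point, and the one I would spell out with care, is the permutation-invariance used to pass from $D_{\{r\}}^{(r-1)}$ to a general $D_{\{j\}}^{(r-1)}$: one must check that the definition of $\Delta_{p,q}^{(k)}$, and hence Propositions \ref{2.1} and \ref{2.2}, depends only on the partition of $[r]$ into a ``large'' block and a ``small'' block and not on the specific labelling, so that relabelling via the transposition $(j\ r)$ is legitimate. Everything else — the identification $H^{(r)}=\Delta_{p,q}^{(r)}$, the identification $D_{\{r\}}^{(r-1)}=\Delta_{p,q}^{(r-1)}$, and the disjoint decomposition of $H^{(r-1)}$ — is immediate from the definitions, and the computation is just a direct substitution into Proposition \ref{2.2}.
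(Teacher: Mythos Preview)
Your proposal is correct and follows exactly the approach the paper takes: the paper's proof is the single sentence ``These follow directly from Proposition \ref{2.2},'' and the surrounding text already notes the identifications $H^{(r)}=\Delta_{p,q}^{(r)}$ and $D_{\{r\}}^{(r-1)}=\Delta_{p,q}^{(r-1)}$ that you spell out. Your more careful treatment of the permutation-invariance needed to pass from $D_{\{r\}}^{(r-1)}$ to a general $D_{\{j\}}^{(r-1)}$ is the one detail the paper leaves entirely implicit, so your write-up is a faithful (and slightly more explicit) expansion of the paper's argument.
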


\begin{proof}
These follow directly from Proposition \ref{2.2}. 
\end{proof}

We now turn to investigate the remaining functions $\Lambda_{H^{(k)}}(p, q)$ when $k=1, 2, \dots, r-2$. 
These cases seem to be more complicated than the case of $k=r, r-1$. 
Suppose that $k=1, 2, \dots , r-2$ and let $A$ be a subset of $[r]$ with ${}^{\sharp} A=r-k$. 
Then we divide the set $D_A^{(k)}$ into $2$-parts as follows: 
$$D_A^{(k)}=E_{A-}^{(k)} \coprod E_{A+}^{(k)}, $$
where 
$$E_{A-}^{(k)}:=\{ \bsn \in D_A^{(k)} \mid \sum_{i \in A} n_i \leq p \}, $$
$$E_{A+}^{(k)}:=\{ \bsn \in D_A^{(k)} \mid \sum_{i \in A} n_i  > p \}. $$
Let 
$$H_{-}^{(k)}:=\coprod_{\stackrel{A \subset [r]}{{}^{\sharp}A=r-k}} E_{A-}^{(k)},$$
$$H_{+}^{(k)}:=\coprod_{\stackrel{A \subset [r]}{{}^{\sharp}A=r-k}} E_{A+}^{(k)}. $$ 
Then 
$$\Lambda_{H^{(k)}}(p, q)=\Lambda_{H_{-}^{(k)}}(p, q)+\Lambda_{H_{+}^{(k)}}(p, q). $$

Let me illustrate this decomposition when $r=3$. Figure \ref{pic2} below 
is the picture where 
$H_{-}^{(1)}=E_{\{1, 2\}-}^{(1)} \coprod E_{\{1, 3\}-}^{(1)} \coprod E_{\{2, 3\}-}^{(1)}$
is the region of the pattern of lines, and 
$H_{+}^{(1)}=E_{\{1, 2\}+}^{(1)} \coprod E_{\{1, 3\}+}^{(1)} \coprod E_{\{2, 3\}+}^{(1)}$
is the region of the pattern of dots. 

\

\begin{figure}[h]

\includegraphics[clip, trim=30 500 0 80]{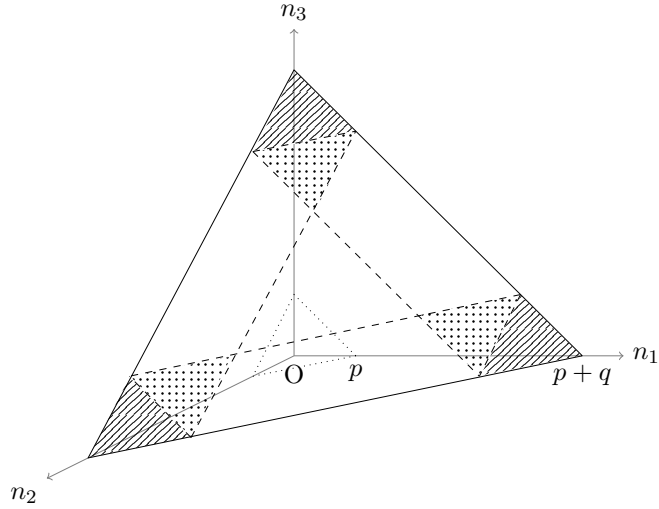}


\caption{A decomposition of $H^{(1)}$ when $r=3$}\label{pic2}

\end{figure}

Here we note that $E_{\{k+1, \dots , r\}-}^{(k)}=\Delta_{p, q}^{(k)}$ for 
any $k=1, 2, \dots , r-2$. Thus, 
the function $\Lambda_{H_{-}^{(k)}}(p, q)$ can be expressed explicitly as follows, similar to 
the one of $\Lambda_{H^{(r)}}(p, q)$ and $\Lambda_{H^{(r-1)}}(p, q)$. 

\begin{Proposition}\label{3.2}
For any $k=1, 2, \dots, r-2$, we have the following description. 
\begin{multline*}
\Lambda_{H_{-}^{(k)}} (p, q)
={q-(k-1)p-1 \choose k-1} 
 \sum_{\stackrel{A \subset [r]}{{}^{\sharp}A=r-k}} 
\lambda_{L_A}(p)  \\
-\sum_{\stackrel{A \subset [r]}{{}^{\sharp}A=r-k}}  
\sum_{\stackrel{n_j \geq 0 (j \in A)}{(\sum_{j \in A} n_j) \leq p}} 
\sum_{i=0}^{(\sum_{j \in A} n_j)-1} {q-(k-1)p-2-i \choose k-2} \ell_R(R/\fka), 
\end{multline*}
where 
$
\displaystyle{
L_A:=\bigg( 
R\Big/
\Big[\sum_{s \in [r] \setminus A} I_s \Big]
\bigg)
\oplus 
\bigoplus_{j \in A}
\bigg(
R \Big/ \Big[ \sum_{s \in [r] \setminus A} I_s +I_j \Big]
\bigg)
}
$ 
is a direct sum of $(r-k+1)$ cyclic modules and 
$\displaystyle{\fka:=\Big(\sum_{s \in [r] \setminus A} I_s \Big)^{p-(\sum_{j \in A} n_j)} 
\prod_{j \in A}\Big(\sum_{s \in [r] \setminus A} I_s+I_j \Big)^{n_j}}$ is an ideal 
depending on $A$ and $n_j$ $(j \in A)$.  
\end{Proposition}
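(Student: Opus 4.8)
The plan is to reduce the computation of $\Lambda_{H_{-}^{(k)}}(p,q)$ to a disjoint union of pieces, each of which is already handled by Proposition~\ref{2.2}. Recall that $H_{-}^{(k)}=\coprod_{\sharp A=r-k} E_{A-}^{(k)}$, so that $\Lambda_{H_{-}^{(k)}}(p,q)=\sum_{\sharp A=r-k}\Lambda_{E_{A-}^{(k)}}(p,q)$, and it suffices to compute each $\Lambda_{E_{A-}^{(k)}}(p,q)$ separately. The key observation, noted in the text just before the statement, is that $E_{\{k+1,\dots,r\}-}^{(k)}=\Delta_{p,q}^{(k)}$; more generally, for an arbitrary subset $A\subset[r]$ with $\sharp A=r-k$, the region $E_{A-}^{(k)}$ is obtained from $\Delta_{p,q}^{(k)}$ by relabelling coordinates. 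Precisely, writing $[r]\setminus A=\{s_1,\dots,s_k\}$ and $A=\{j_1,\dots,j_{r-k}\}$, the permutation of $[r]$ sending $\{1,\dots,k\}$ to $[r]\setminus A$ and $\{k+1,\dots,r\}$ to $A$ carries $\Delta_{p,q}^{(k)}$ bijectively onto $E_{A-}^{(k)}$ and respects the length function, since $\ell_R(R/J_{p,q}(\bsn))$ depends on $\bsn$ and on the ideals $I_1,\dots,I_r$ only through the obvious symmetric way.

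First I would make this relabelling precise and verify that it identifies $\Lambda_{E_{A-}^{(k)}}(p,q)$ with the right-hand side of Proposition~\ref{2.2} applied with the ideals $I_1,\dots,I_k$ replaced by $I_{s_1},\dots,I_{s_k}$ and $I_{k+1},\dots,I_r$ replaced by $I_{j_1},\dots,I_{j_{r-k}}$. Under this substitution the sum $I_1+\dots+I_k$ becomes $\sum_{s\in[r]\setminus A}I_s$, the module $L_k$ becomes exactly the module $L_A$ displayed in the statement, and the ideal $\mathfrak a$ in Proposition~\ref{2.2} becomes exactly the ideal $\mathfrak a$ displayed here, with the running indices $n_{k+1},\dots,n_r$ relabelled as $n_j$ for $j\in A$. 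The binomial coefficients in Proposition~\ref{2.2} involve only $p$, $q$, $k$ and the $n_j$, so they are untouched by the relabelling. Applying Proposition~\ref{2.2} therefore gives, for each fixed $A$,
\begin{multline*}
\Lambda_{E_{A-}^{(k)}}(p,q)={q-(k-1)p-1 \choose k-1}\lambda_{L_A}(p)\\
-\sum_{\stackrel{n_j\geq 0\,(j\in A)}{(\sum_{j\in A}n_j)\leq p}}\sum_{i=0}^{(\sum_{j\in A}n_j)-1}{q-(k-1)p-2-i \choose k-2}\ell_R(R/\mathfrak a),
\end{multline*}
and summing over all $A\subset[r]$ with $\sharp A=r-k$ yields the asserted formula.

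The one point that needs genuine care, and which I expect to be the main (though modest) obstacle, is checking the hypothesis $q\geq(p+1)r$ required by Proposition~\ref{2.2} is inherited correctly under the relabelling — here it is immediate, since that inequality involves only $p$, $q$, $r$ and not the labelling of coordinates — and, more substantively, verifying that $E_{A-}^{(k)}$ really is the image of $\Delta_{p,q}^{(k)}$ under the coordinate permutation: one must confirm that the defining conditions ``$n_i>p$ for $i\notin A$, $n_i\leq p$ for $i\in A$, $\sum_{i\in A}n_i\leq p$'' of $E_{A-}^{(k)}$ are precisely the conditions ``$n_1,\dots,n_k>p$, $n_{k+1}+\dots+n_r\leq p$'' of $\Delta_{p,q}^{(k)}$ after relabelling (noting that $\sum_{i\in A}n_i\leq p$ automatically forces $n_i\leq p$ for each $i\in A$, so the condition $n_i\leq p$ for $i\in A$ is redundant in both descriptions). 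Once this bookkeeping is in place the proof is just an application of Proposition~\ref{2.2} term by term.
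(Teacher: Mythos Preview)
Your proposal is correct and follows exactly the approach the paper intends: the paper's proof is the single line ``This follows directly from Proposition~\ref{2.2},'' and your argument simply unpacks that line by making explicit the coordinate relabelling that identifies each $E_{A-}^{(k)}$ with $\Delta_{p,q}^{(k)}$ and then summing over $A$. The bookkeeping you flag (redundancy of $n_i\le p$ given $\sum_{i\in A}n_i\le p$, and invariance of the hypothesis $q\ge(p+1)r$) is exactly what is needed and is handled correctly.
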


\begin{proof}
This follows directly from Proposition \ref{2.2}. 
\end{proof}

On the other hand, the function $\Lambda_{H_{+}^{(k)}}(p, q)$ seems to be more complicated than 
the one $\Lambda_{H_{-}^{(k)}}(p, q)$. 
We do not get the explicit description, but we have the following inequality.

\begin{Proposition}\label{3.3}
For any $k=1, 2, \dots , r-2$, there exists a polynomial $g^{\circ}_{k}(X) \in \mathbb Q[X]$ 
of degree $d+r-k$ 
such that $$\Lambda_{H_{+}^{(k)}}(p, q) \leq {q-(k-1)p-1 \choose k-1}g^{\circ}_{k}(p). $$
\end{Proposition}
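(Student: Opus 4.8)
The plan is to bound $\Lambda_{H_{+}^{(k)}}(p,q)$ from above by replacing each ideal $J_{p,q}(\bsn)$ with a smaller ideal for which the length is easier to estimate, and then to count the contributing vectors $\bsn$ by slicing along the ``large'' coordinates. Concretely, fix $k$ with $1 \leq k \leq r-2$ and a subset $A \subset [r]$ with ${}^{\sharp}A = r-k$. For $\bsn \in E_{A+}^{(k)}$ we have $n_i > p$ for $i \notin A$ (so these contribute no factor to $J_{p,q}(\bsn)$ beyond what is forced) and $\sum_{i \in A} n_i > p$. The key reduction is that since $|\bsn| = p+q$ and $q \geq (p+1)r \gg 0$, most of the mass sits in coordinates that exceed $p$, and one can bound $\ell_R(R/J_{p,q}(\bsn))$ uniformly: because $\sum_{i\in A}n_i>p$, every multi-index $\boldsymbol i$ with $|\boldsymbol i|=p$ and $\boldsymbol 0 \leq \boldsymbol i \leq \bsn$ has $\boldsymbol I^{\boldsymbol i} \subset \fkm^p$ roughly speaking, but more usefully $J_{p,q}(\bsn) \supset \big(\sum_{i \in A} I_i\big)^p$ (spread the exponent $p$ across the $A$-coordinates, which is possible exactly when $\sum_{i\in A} n_i \geq p$). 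Hence $\ell_R(R/J_{p,q}(\bsn)) \leq \ell_R\!\big(R/(\textstyle\sum_{i\in A}I_i)^p\big)$, a quantity independent of $\bsn$ that is eventually a polynomial in $p$ of degree $d$.

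Next I would count the number of $\bsn \in E_{A+}^{(k)}$. These are vectors $\bsn \in \mathbb Z_{\geq 0}^r$ with $|\bsn| = p+q$, with $n_i > p$ for the $k$ coordinates $i \notin A$, with $0 \leq n_i \leq p$ for $i \in A$, and with $\sum_{i\in A} n_i > p$. The constraints $0 \leq n_i \leq p$ on the $(r-k)$ coordinates in $A$ confine those to a box, and then the $k$ ``large'' coordinates are determined up to a simplex condition: writing $n_i = p + m_i$ for $i \notin A$ with $m_i \geq 1$, the count of admissible large parts for fixed $A$-coordinates is the number of lattice points in a $(k-1)$-dimensional simplex of the form $\sum m_i = q - (\text{stuff depending only on }p\text{ and the }n_i,\ i\in A)$, which is a binomial coefficient $\binom{q - (k-1)p - 1 - (\cdots)}{k-1}$ exactly of the shape appearing in Propositions~\ref{2.1}--\ref{2.2}. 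Since $q \geq (p+1)r$, this binomial is dominated by $\binom{q-(k-1)p-1}{k-1}$. Summing over the $\binom{r}{k}$ choices of $A$ and over the $A$-coordinates inside their box (a sum of $O(p^{r-k})$ terms) gives
$$\Lambda_{H_{+}^{(k)}}(p,q) \leq \binom{q-(k-1)p-1}{k-1} \cdot C \cdot p^{r-k} \cdot \ell_R\!\big(R/(\textstyle\sum_{i\in A}I_i)^p\big)$$
for a constant $C$, and since $\ell_R(R/(\sum I_i)^p)$ is eventually a polynomial of degree $d$ in $p$, the product $C p^{r-k} \ell_R(R/(\sum I_i)^p)$ is bounded above by a polynomial $g_k^{\circ}(p) \in \mathbb Q[X]$ of degree $(r-k) + d = d+r-k$. (One takes the maximum over $A$ of the relevant ideal-power lengths, or simply sums them, to absorb the dependence on $A$ into a single polynomial.)

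The main obstacle I anticipate is making the uniform length bound $\ell_R(R/J_{p,q}(\bsn)) \leq \ell_R(R/(\sum_{i\in A}I_i)^p)$ genuinely rigorous for \emph{all} $\bsn \in E_{A+}^{(k)}$, rather than just the ``generic'' ones: I need to verify that the containment $\big(\sum_{i\in A} I_i\big)^p \subseteq J_{p,q}(\bsn)$ really does hold whenever $\sum_{i\in A} n_i > p$ and the $A$-coordinates satisfy $n_i \leq p$. This is a bookkeeping argument: expanding $(\sum_{i \in A} I_i)^p$ into a sum of products $\prod_{i\in A} I_i^{a_i}$ with $\sum_{i\in A} a_i = p$, I must check each such exponent vector (extended by zeros outside $A$) is $\leq \bsn$ — i.e. $a_i \leq n_i$ for $i \in A$ — and this can fail for individual monomials even though $\sum a_i = p < \sum n_i$. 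The fix is that $J_{p,q}(\bsn)$ is a \emph{sum} over all valid $\boldsymbol i$, so I only need that for each monomial $\prod I_i^{a_i}$ appearing in the expansion, \emph{some} redistribution $\boldsymbol i \leq \bsn$ with $|\boldsymbol i| = p$ has $\boldsymbol I^{\boldsymbol i} \subseteq \prod I_i^{a_i}$; since $I_i^{a_i} \supseteq I_i^{n_i}$ is false in the wrong direction, instead I argue directly that $\prod_{i\in A} I_i^{a_i} \supseteq \boldsymbol I^{\boldsymbol i}$ for a suitable $\boldsymbol i$ obtained by capping $a_i$ at $n_i$ and pushing the surplus onto coordinates $i \notin A$ (where $n_i > p$ leaves ample room). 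Getting the indexing of this redistribution exactly right, together with tracking the resulting binomial coefficient in the count so that it is bounded by $\binom{q-(k-1)p-1}{k-1}$ uniformly in $p,q$ with $q \geq (p+1)r$, is the delicate part; everything else is degree-counting that mirrors the identities already established in Propositions~\ref{2.1} and \ref{2.2}.
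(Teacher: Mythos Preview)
Your overall plan --- bound each $\ell_R(R/J_{p,q}(\bsn))$ by a power-of-an-ideal length that is polynomial in $p$, and bound the number of $\bsn \in E_{A+}^{(k)}$ by the binomial $\binom{q-(k-1)p-1}{k-1}$ times something polynomial in $p$ --- is sound. But the specific containment you use is wrong, and your proposed ``fix'' does not repair it.

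You claim $\big(\sum_{i\in A} I_i\big)^p \subseteq J_{p,q}(\bsn)$ for $\bsn\in E_{A+}^{(k)}$. This fails. Take $r=3$, $k=1$, $A=\{1,2\}$, $R=k[[x,y]]$, $I_1=\fkm=(x,y)$, $I_2=I_3=J=(x^2,y)$, $p=2$, and $\bsn=(1,2,n_3)$ with $n_3>2$. Then $\bsn\in E_{A+}^{(1)}$ since $n_1,n_2\leq p$ and $n_1+n_2=3>2$. One computes $J_{p,q}(\bsn)=\fkm J=(x^3,xy,y^2)$, while $(I_1+I_2)^p=\fkm^2=(x^2,xy,y^2)$; and $x^2\notin (x^3,xy,y^2)$. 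Your redistribution argument at the end has the containment direction backwards: showing $\boldsymbol I^{\boldsymbol i}\subseteq \prod_{i\in A}I_i^{a_i}$ for some valid $\boldsymbol i$ does \emph{not} yield $\prod_{i\in A}I_i^{a_i}\subseteq J_{p,q}(\bsn)$.

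The easy correction is to use the complement: for $\bsn\in E_{A+}^{(k)}$ one has $n_j>p$ for every $j\notin A$, so any $\boldsymbol i$ supported on $[r]\setminus A$ with $|\boldsymbol i|=p$ satisfies $\boldsymbol i\leq\bsn$. Hence
\[
\Big(\sum_{j\notin A} I_j\Big)^p \;=\; \sum_{\substack{\boldsymbol i\geq\boldsymbol 0,\ |\boldsymbol i|=p\\ \supp(\boldsymbol i)\subseteq [r]\setminus A}} \boldsymbol I^{\boldsymbol i}\;\subseteq\; J_{p,q}(\bsn),
\]
which gives $\ell_R(R/J_{p,q}(\bsn))\leq \ell_R\big(R/(\sum_{j\notin A}I_j)^p\big)$, a Hilbert--Samuel function of degree $d$ in $p$. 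With this bound, your lattice-point count (which is correct: for fixed $(n_i)_{i\in A}$ the number of completions is $\binom{q-(k-1)p-1-\sum_{i\in A}n_i}{k-1}\leq\binom{q-(k-1)p-1}{k-1}$, and the $(n_i)_{i\in A}$ range over at most $(p+1)^{r-k}$ points) yields exactly the desired inequality with a polynomial $g_k^\circ$ of degree $d+r-k$. The paper itself does not give a self-contained proof here --- it cites \cite[Lemma~3.5]{Ha2} --- so once you swap $A$ for $[r]\setminus A$ in the ideal bound, your argument is a perfectly good direct proof.
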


\begin{proof}
This follows from \cite[Lemma 3.5]{Ha2}. 
\end{proof}

Here we consider the following functions $g_k(p)$ and $h_k(p, q)$ appeared in 
Propositions \ref{3.1} and \ref{3.2}, 
which will be used in the next section. For any $k=1, \dots , r-1$, we define 

\begin{align}
g_k(p)&:=\sum_{\stackrel{A \subset [r]}{{}^{\sharp}A=r-k}} \lambda_{L_A}(p) \label{polyg} \\
h_k(p, q)&:=\sum_{\stackrel{A \subset [r]}{{}^{\sharp}A=r-k}} 
\sum_{\stackrel{n_j \geq 0 (j \in A)}{(\sum_{j \in A} n_j) \leq p}} 
\sum_{i=0}^{(\sum_{j \in A} n_j)-1} {q-(k-1)p-2-i \choose k-2} \ell_R(R/\fka)
\end{align}
where 
$
\displaystyle{
L_A:=\bigg( 
R\Big/
\Big[\sum_{s \in [r] \setminus A} I_s \Big]
\bigg)
\oplus 
\bigoplus_{j \in A}
\bigg(
R \Big/ \Big[ \sum_{s \in [r] \setminus A} I_s+I_j \Big]
\bigg)
}
$ 
is a direct sum of $(r-k+1)$ cyclic modules and 
$\displaystyle{\fka:=\Big(\sum_{s \in [r] \setminus A} I_s \Big)^{p-(\sum_{j \in A} n_j)} 
\prod_{j \in A}\Big(\sum_{s \in [r] \setminus A} I_s+I_j \Big)^{n_j}}$. 
When $k=r$, we set $g_r(p)=\lambda_{R/[I_1+\dots +I_r]}(p)$ and $h_r(p, q)=0$. 
Note that for $p, q \gg 0$, $g_k(p)$ is a polynomial function of degree
$d+r-k$, and $h_k(p, q)$ is a non-negative integer valued function.

Then, the above two Propositions \ref{3.2} and \ref{3.3} imply the following. 

\begin{Corollary}\label{3.4}
For any $k=1, 2, \dots , r-2$, there exists a polynomial $f_{k}(X) \in \mathbb Q[X]$ 
of degree $d+r-k$ 
such that $$\Lambda_{H^{(k)}}(p, q) \leq {q-(k-1)p-1 \choose k-1}f_{k}(p). $$
\end{Corollary}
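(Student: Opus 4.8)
The plan is to combine the decomposition $\Lambda_{H^{(k)}}(p,q)=\Lambda_{H_{-}^{(k)}}(p,q)+\Lambda_{H_{+}^{(k)}}(p,q)$ with the two preceding estimates. The term $\Lambda_{H_{+}^{(k)}}(p,q)$ is already handled by Proposition \ref{3.3}, which gives $\Lambda_{H_{+}^{(k)}}(p,q)\leq \binom{q-(k-1)p-1}{k-1}g^{\circ}_{k}(p)$ with $g^{\circ}_{k}(X)\in\mathbb{Q}[X]$ of degree $d+r-k$. So the real work is to bound $\Lambda_{H_{-}^{(k)}}(p,q)$ by $\binom{q-(k-1)p-1}{k-1}$ times a polynomial in $p$ of degree $d+r-k$; then one simply adds the two polynomials and sets $f_k(X):=g^{\circ}_k(X)+(\text{the polynomial bounding the }H_-\text{ part})$, noting the sum still has degree $d+r-k$ (degrees can only drop under addition, but here both summands genuinely have degree $d+r-k$ so it stays exactly $d+r-k$; even if cancellation occurred, ``degree $d+r-k$'' should be read as the asserted bound, which still holds).

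For the $H_-^{(k)}$ part I would invoke the explicit formula in Proposition \ref{3.2}:
\[
\Lambda_{H_{-}^{(k)}}(p,q)={q-(k-1)p-1 \choose k-1}\,g_k(p)-h_k(p,q),
\]
using the notation $g_k(p)=\sum_{{}^{\sharp}A=r-k}\lambda_{L_A}(p)$ and $h_k(p,q)$ introduced just before the statement. Since $h_k(p,q)$ is a non-negative integer valued function (as remarked in the text, each binomial coefficient $\binom{q-(k-1)p-2-i}{k-2}$ occurring is non-negative in the range $q\geq(p+1)r$ and each $\ell_R(R/\fka)\geq 0$), we get the clean upper bound
\[
\Lambda_{H_{-}^{(k)}}(p,q)\leq {q-(k-1)p-1 \choose k-1}\,g_k(p).
\]
Now $g_k(p)$ is, for $p\gg 0$, a polynomial in $p$; its degree is $d+r-k$ because each $\lambda_{L_A}(p)$ is the ordinary Buchsbaum-Rim function of a direct sum of $r-k+1$ cyclic $\fkm$-primary modules $L_A$, hence a polynomial of degree $d+(r-k+1)-1=d+r-k$ with positive leading coefficient, and a sum of such (with matching leading-term sign) again has degree exactly $d+r-k$. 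Thus there is a polynomial, call it $\tilde g_k(X)\in\mathbb{Q}[X]$ of degree $d+r-k$, agreeing with $g_k(p)$ for $p\gg 0$, and since we are already assuming $q\geq (p+1)r\gg 0$ we may replace $g_k(p)$ by $\tilde g_k(p)$.

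Putting the pieces together, set $f_k(X):=\tilde g_k(X)+g^{\circ}_k(X)$; this lies in $\mathbb{Q}[X]$, has degree $d+r-k$, and satisfies
\[
\Lambda_{H^{(k)}}(p,q)=\Lambda_{H_{-}^{(k)}}(p,q)+\Lambda_{H_{+}^{(k)}}(p,q)\leq {q-(k-1)p-1 \choose k-1}\bigl(\tilde g_k(p)+g^{\circ}_k(p)\bigr)={q-(k-1)p-1 \choose k-1}f_k(p),
\]
which is the claim. I expect the only genuinely delicate point to be the bookkeeping of degrees: one must be sure that $g_k(p)$ is honestly a polynomial of degree $d+r-k$ (i.e.\ that the $L_A$ really are finite-length modules presented by $\fkm$-primary ideals, which follows since all $I_s$ are $\fkm$-primary so any sum $\sum_{s\in[r]\setminus A}I_s$ and any $\sum_{s\in[r]\setminus A}I_s+I_j$ is $\fkm$-primary), and that Proposition \ref{3.3} is being applied with the correct indexing $k=1,\dots,r-2$. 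Everything else is a direct substitution from Propositions \ref{3.2} and \ref{3.3} together with the non-negativity of $h_k(p,q)$.
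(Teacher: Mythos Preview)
Your proof is correct and follows essentially the same route as the paper: decompose $\Lambda_{H^{(k)}}=\Lambda_{H_-^{(k)}}+\Lambda_{H_+^{(k)}}$, bound the first summand by $\binom{q-(k-1)p-1}{k-1}g_k(p)$ using Proposition~\ref{3.2} and the non-negativity of $h_k(p,q)$, bound the second by $\binom{q-(k-1)p-1}{k-1}g_k^{\circ}(p)$ via Proposition~\ref{3.3}, and set $f_k=g_k+g_k^{\circ}$. Your extra care about the degree of $g_k$ and the distinction between $g_k(p)$ and its eventual polynomial form is reasonable but not strictly needed for the corollary as stated.
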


\begin{proof}
By Propositions \ref{3.2} and \ref{3.3}, 
\begin{eqnarray*}
\Lambda_{H^{(k)}}(p, q)&=&\Lambda_{H_{-}^{(k)}}(p, q)+\Lambda_{H_{+}^{(k)}}(p, q) \\
&\leq& {q-(k-1)p-1 \choose k-1} g_k(p)-h_k(p, q)+{q-(k-1)p-1 \choose k-1} g_k^{\circ}(p) \\
&=&{q-(k-1)p-1 \choose k-1} \big(g_k(p)+g^{\circ}_k(p)\big)-h_k(p, q)\\
&\leq &{q-(k-1)p-1 \choose k-1} (g_k(p)+g^{\circ}_k(p)). 
\end{eqnarray*}
Thus, $f_k(X):=g_k(X)+g_k^{\circ}(X)$ is our desired polynomial. 
\end{proof}

\section{Proof of Theorem \ref{main}}
We give a proof of Theorem \ref{main}. 
In this section, we work in the same situation and under the same notation as in the previous sections. 
For $k=1, 2, \dots , r$, we consider the following function: 
$$F_k(p, q):=\Lambda(p, q)-\sum_{i=1}^k {q-(r-i)p-1 \choose r-i} g_{r-i+1}(p), $$
which is a polynomial function for $p, q \gg 0$ with the total degree is at most 
$d+r-1$. We begin with the following. 

\begin{Proposition}\label{limit}
Suppose that $p$ is a large enough fixed integer. Then 
$$\lim_{q \to \infty} \frac{1}{q^{r-2}} F_2(p, q)=0. $$
\end{Proposition}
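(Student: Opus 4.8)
The plan is to decompose $\Lambda(p,q)$ according to the partition $H=\coprod_{k=1}^r H^{(k)}$ and to track which pieces survive in the limit $\frac{1}{q^{r-2}}F_2(p,q)$. First I would write
$$\Lambda(p,q)=\Lambda_{H^{(r)}}(p,q)+\Lambda_{H^{(r-1)}}(p,q)+\sum_{k=1}^{r-2}\Lambda_{H^{(k)}}(p,q),$$
and substitute the explicit descriptions from Proposition \ref{3.1}. The point is that $F_2(p,q)$ subtracts off precisely the two leading binomial-coefficient terms ${q-(r-1)p-1\choose r-1}g_r(p)$ and ${q-(r-2)p-1\choose r-2}g_{r-1}(p)$ coming from $\Lambda_{H^{(r)}}$ and $\Lambda_{H^{(r-1)}}$ (recall $g_r(p)=\lambda_{R/[I_1+\dots+I_r]}(p)$ and $g_{r-1}(p)=\sum_{j=1}^r\lambda_{L_j}(p)$, matching the notation in Proposition \ref{3.1}). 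So after this cancellation,
$$F_2(p,q)=-\sum_{j=1}^r\sum_{n=0}^p\sum_{i=0}^{n-1}{q-(r-2)p-2-i\choose r-3}\ell_R(R/\fka_j(n))+\sum_{k=1}^{r-2}\Lambda_{H^{(k)}}(p,q).$$

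Next I would estimate the $q$-degree of each surviving summand, with $p$ held fixed. The correction term from $\Lambda_{H^{(r-1)}}$ involves ${q-(r-2)p-2-i\choose r-3}$, which as a function of $q$ has degree $r-3<r-2$; since the double sum over $n$ and $i$ is finite (bounded in terms of the fixed $p$), this whole term is $O(q^{r-3})$, hence contributes $0$ after dividing by $q^{r-2}$ and letting $q\to\infty$. For the terms with $k=1,\dots,r-2$, I would invoke Corollary \ref{3.4}: $\Lambda_{H^{(k)}}(p,q)\le {q-(k-1)p-1\choose k-1}f_k(p)$ with $f_k$ a polynomial (in the fixed variable $p$, so a constant here). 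The binomial ${q-(k-1)p-1\choose k-1}$ has $q$-degree $k-1\le r-3<r-2$ for $k\le r-2$, so each such term is $O(q^{r-3})$ as well, and since $\Lambda_{H^{(k)}}(p,q)\ge 0$ we get $0\le \Lambda_{H^{(k)}}(p,q)\le O(q^{r-3})$, forcing the limit of $\frac{1}{q^{r-2}}\Lambda_{H^{(k)}}(p,q)$ to be $0$. Summing these finitely many vanishing limits gives the claim.

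The main obstacle — really the only subtlety — is making sure that nothing of $q$-degree $r-2$ or higher has been overlooked: one must confirm that the \emph{only} pieces of $\Lambda(p,q)$ with $q$-degree $\ge r-2$ (for fixed $p$) are exactly the two leading terms that $F_2$ removes. The $H^{(r)}$ contribution is ${q-(r-1)p-1\choose r-1}g_r(p)$, of $q$-degree $r-1$, entirely cancelled; the $H^{(r-1)}$ contribution has a piece of $q$-degree $r-2$, namely ${q-(r-2)p-1\choose r-2}g_{r-1}(p)$, also entirely cancelled, leaving only the $q$-degree $\le r-3$ remainder identified above; and Corollary \ref{3.4} caps every $H^{(k)}$ with $k\le r-2$ at $q$-degree $k-1\le r-3$. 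So no term of $q$-degree $\ge r-2$ remains in $F_2(p,q)$, and therefore $\lim_{q\to\infty}q^{-(r-2)}F_2(p,q)=0$. I would present this as a short degree-count once the decomposition and the two substitutions are in place; everything else is bookkeeping over finite index sets with $p$ fixed.
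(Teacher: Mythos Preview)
Your proposal is correct and follows essentially the same route as the paper: decompose $\Lambda(p,q)$ via $H=\coprod_k H^{(k)}$, use Proposition \ref{3.1} to identify and cancel the $q$-degree $r-1$ and $r-2$ pieces from $H^{(r)}$ and $H^{(r-1)}$, and then bound the remaining contributions (the ${q-(r-2)p-2-i\choose r-3}$ correction and each $\Lambda_{H^{(k)}}$ for $k\le r-2$ via Corollary \ref{3.4}) by $O(q^{r-3})$. The only cosmetic difference is that the paper packages this as a sandwich $-h_{r-1}(p,q)\le F_2(p,q)\le \sum_{k=1}^{r-2}{q-(k-1)p-1\choose k-1}f_k(p)$ and shows both bounds vanish, whereas you show each summand of $F_2$ vanishes separately; the content is identical.
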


\begin{proof}
Fix $p \gg 0$. By Proposition \ref{3.1} and Corollary \ref{3.4}, 
we have the following equalities and inequality.  
\begin{eqnarray*}
F_2(p, q)+h_{r-1}(p, q)&=&\Lambda(p, q)-\Lambda_{H^{(r)}}(p, q)-\Lambda_{H^{(r-1)}}(p, q)\\
&=&\sum_{k=1}^{r-2}\Lambda_{H^{(k)}}(p, q)\\
&\leq & \sum_{k=1}^{r-2} {q-(k-1)p-1 \choose k-1} f_k(p). 
\end{eqnarray*}
Hence, we have that 
$$-h_{r-1}(p, q) \leq F_2(p, q) \leq \sum_{k=1}^{r-2} {q-(k-1)p-1 \choose k-1} f_k(p).  $$
Therefore, it is enough to show that 
\begin{align}
& \lim_{q \to \infty} \frac{1}{q^{r-2}} \sum_{k=1}^{r-2} {q-(k-1)p-1 \choose k-1} f_k(p)=0, \ 
\mbox{and} \label{lim1}\\
& \lim_{q \to \infty} \frac{1}{q^{r-2}} h_{r-1}(p, q)=0.\label{lim2}
\end{align}
The first assertion (\ref{lim1}) is clear because the degree of 
a polynomial function 
$$\sum_{k=1}^{r-2} {q-(k-1)p-1 \choose k-1} f_k(p)$$ 
with respect to $q$ is at most $(r-2)-1=r-3$. 
We show the second assertion (\ref{lim2}). Then 
one can see that 
\begin{eqnarray*}
h_{r-1}(p, q) &=& 
\sum_{j=1}^r  \sum_{n=0}^p \sum_{i=0}^{n-1} {q-(r-2)p-2-i \choose r-3} \ell_R(R/\fka_j(n)) \\
&\leq & 
\sum_{j=1}^r  \sum_{n=0}^p n {q-(r-2)p-2 \choose r-3} \ell_R(R/\fka_j(n)) \\
&\leq & 
\sum_{j=1}^r  \sum_{n=0}^p p {q-(r-2)p-2 \choose r-3} \ell_R(R/\fka_j(n)) \\
&= & 
p {q-(r-2)p-2 \choose r-3} \sum_{j=1}^r  \sum_{n=0}^p \ell_R(R/\fka_j(n)),  \\
\end{eqnarray*}
where
$\displaystyle{\fka_j(n):=(I_1+\dots +\hat{I_j}+ \dots +I_r)^{p-n} (I_1+\dots+ I_r)^n}$.
Note that 
$$\sum_{j=1}^r \sum_{n=0}^p \ell_R(R/\fka_j(n))=\sum_{j=1}^r \lambda_{L_j}(p)$$
is a sum of the ordinary Buchsbaum-Rim functions of two cyclic modules, 
where
$$L_j=R/[I_1+\dots +\hat{I_j}+\dots +I_r]
\oplus R/[I_1+\dots +I_r].  
$$ 
Hence, noting that $h_{r-1}(p, q) \geq 0$, we have that
$$0 \leq h_{r-1}(p, q) \leq {q-(r-2)p-2 \choose r-3} u(p)$$
for some polynomial function $u(p)$ of degree $(d+1)+1=d+2$. 
Therefore,  
$$\lim_{q \to \infty} \frac{1}{q^{r-2}} {q-(r-2)p-2 \choose r-3} u(p)=0$$ 
so that $\lim_{q \to \infty} \frac{1}{q^{r-2}} h_{r-1}(p, q)=0. $
\end{proof}

We are now ready to prove Theorem \ref{main}. 

\begin{proof}[Proof of Theorem \ref{main}]
The degree of $\Lambda(p, q)$ with respect to $q$ is at most $r-1$ so that 
one can write
$$\Lambda(p, q)=\sum_{i=0}^{r-1} a_i q^i $$
where each $a_i$ is a polynomial function of $p$ with degree at most $d+r-1-i$. 
Similarly, we can write
\begin{align}
{q-(r-1)p-1 \choose r-1} g_r(p)&=\sum_{j=0}^{r-1} b_j q^j \notag \\
{q-(r-2)p-1 \choose r-2} g_{r-1}(p)&=\sum_{k=0}^{r-2} c_k q^k \notag
\end{align}
where each $b_j$ (resp. $c_k$) is a polynomial function of $p$ with degree at most $d+r-1-j$ (resp. $d+r-1-k$). Then 
$$F_2(p, q)=(a_{r-1}-b_{r-1})q^{r-1}+(a_{r-2}-b_{r-2}-c_{r-2})q^{r-2}+ (\mbox{lower terms in} \ q). $$
By Proposition \ref{limit}, we have the equalities as polynomials of $p$, 
\begin{align}
a_{r-1}&=b_{r-1}, \ \mbox{and} \label{eq1} \\
a_{r-2}&=b_{r-2}+c_{r-2}. \label{eq2}
\end{align}
Note that the first equality (\ref{eq1}) implies a formula 
$e^{r-1}(C)=e(R/[I_1+\dots +I_r])$ which is 
our previous result in \cite{Ha2}. 
We then look at the second equality (\ref{eq2}). 
Since the total degree $\Lambda(p, q)$ is $d+r-1$, and the coefficient of $p^{d+1}q^{r-2}$ is non-zero, 
which is $\frac{e^{r-2}(C)}{(d+1)!(r-2)!}$, the polynomial 
$a_{r-2}$ is of the form: 
$$a_{r-2}=\frac{e^{r-2}(C)}{(d+1)!(r-2)!}p^{d+1}+(\mbox{lower terms in } p). $$
Since $g_r(p)=\lambda_{R/[I_1+\dots +I_r]}(p)$ is the Hilbert-Samuel function of $I_1+\dots +I_r$, 
\begin{eqnarray*}
&&{q-(r-1)p-1 \choose r-1} g_r(p) \\
&=&{q-(r-1)p-1 \choose r-1} \left( \frac{e(R/[I_1+\dots +I_r])}{d!}p^d+(\mbox{lower terms in } p )\right) \\
&=&\frac{(q-(r-1)p)^{r-1}}{(r-1)!} \cdot \frac{e(R/[I_1+\dots +I_r])}{d!}p^d+(\mbox{lower terms}) \\
&=&\frac{e(R/[I_1+\dots +I_r])}{d!(r-1)!} p^dq^{r-1} 
-\frac{(r-1)e(R/[I_1+\dots +I_r])}{d!(r-2)!}p^{d+1}q^{r-2}+(\mbox{lower terms in $q$})
\end{eqnarray*}
so that 
$$b_{r-2}=-\frac{(r-1)e(R/[I_1+\dots +I_r])}{d!(r-2)!}p^{d+1}. $$
Similarly, since $g_{r-1}(p)=\sum_{j=1}^r \lambda_{L_j}(p)$, and 
its normalized leading coefficient is 
$$E_{r-1}:=E_{r-1}(I_1, \dots , I_r):=\sum_{j=1}^r e(L_j), $$
where  
$$L_j= R/[I_1+\dots +\hat{I_j}+\dots +I_r] \oplus R/[I_1+\dots +I_r], $$
we have that 
\begin{eqnarray*}
{q-(r-2)p-1 \choose r-2} g_{r-1}(p) 
&=&{q-(r-2)p-1 \choose r-2} \left( \frac{E_{r-1}}{(d+1)!}p^{d+1}+(\mbox{lower terms in $p$}) \right) \\
&=&\frac{(q-(r-2)p)^{r-2}}{(r-2)!} \cdot \frac{E_{r-1}}{(d+1)!}p^{d+1}+(\mbox{lower terms}) \\
&=&\frac{E_{r-1}}{(d+1)!(r-2)!} p^{d+1}q^{r-2}+(\mbox{lower terms in $q$}). 
\end{eqnarray*}
Therefore, we get that 
$$c_{r-2}=\frac{E_{r-1}}{(d+1)!(r-2)!} p^{d+1}. $$
By comparing the coefficient of $p^{d+1}$ in the equation (\ref{eq2}), we have the equality
$$\frac{e^{r-2}(C)}{(d+1)!(r-2)!}=-\frac{(r-1)e(R/[I_1+\dots +I_r])}{d!(r-2)!}+\frac{E_{r-1}(I_1, \dots , I_r)}{(d+1)!(r-2)!}. $$
By multiplying $(d+1)!(r-2)!$ to the above equation, we get the desired formula. 
\end{proof}

As stated in the proof, the proof of Theorem \ref{main} contains our previous result in \cite{Ha2}. 
Moreover, the obtained formula for $e^{r-2}(C)$ can be viewed as a natural generalization 
of the Kirby-Rees formula given in \cite{KR2}. 

\begin{Corollary}\label{cor}
Let $I_1, \dots , I_r$ be $\fkm$-primary ideals in $R$ and assume that 
$I_1, \dots , I_{r-1} \subset I_r$, that is, the ideal $I_r$ is the largest 
ideal. Then we have a formula
$$e^{r-2}(R/I_1 \oplus \dots \oplus R/I_r)=e(R/[I_1+\dots +I_{r-1}] \oplus R/I_r). $$
In particular, if $I_1 \subset I_2 \subset \dots \subset I_r$, then 
$$e^{r-2}(R/I_1 \oplus \dots  \oplus R/I_r)=e(R/I_{r-1} \oplus R/I_r). $$
\end{Corollary}

\begin{proof}
Suppose that $I_1, \dots , I_{r-1} \subset I_r$. 
Then by Theorem \ref{main}, 
\begin{eqnarray*}
e^{r-2}(C) &=& \sum_{j=1}^r e( R/[I_1+\dots +\hat{I_j}+\dots +I_r]  
\oplus R/[I_1+\dots +I_r] ) \\
&& \hspace{5.5cm} -(d+1)(r-1)e(R/[I_1+\dots +I_r]) \\
&=& e(R/[I_1+ \dots +I_{r-1}] \oplus R/I_r)+(r-1)e(R/I_r \oplus R/I_r) \\
&& \hspace{7.5cm} -(d+1)(r-1)e(R/I_r) \\
&=& e(R/[I_1+\dots +I_{r-1}] \oplus R/I_r)+(r-1)(d+1)e(R/I_r) \\
&& \hspace{7.5cm} -(d+1)(r-1)e(R/I_r) \\
&=& e(R/[I_1+\dots +I_{r-1}] \oplus R/I_r). 
\end{eqnarray*}
Here the third equality follows from the elementary formula (\ref{ordinaryBuchsbaum-Rim}). 
\end{proof}

Before closing this article, we would like to give a few observations on the remaining multiplicities. 
We first recall the polynomial function $F_k(p, q)$ defined at the 
beginning of this section: 
$$F_k(p, q):=\Lambda(p, q)-\sum_{i=1}^k {q-(r-i)p-1 \choose r-i} g_{r-i+1}(p). $$
The key of our proof of Theorem \ref{main} is the fact that $\deg_q F_2(p, q) \leq r-3$ (Proposition \ref{limit}). It would be interesting to know whether this kind of property 
holds true or not for which $k$. 

\begin{Question}\label{question}
Let $p$ be a fixed large enough integer. Then for which $k=1, 2, \dots , r-1$, 
does the following hold true?
$$\lim_{q \to \infty} \frac{1}{q^{r-k}} F_k(p, q)=0. $$ 
In other word, is the degree of $F_k(p, q)$ with respect to $q$ at most $r-k-1$?
\end{Question}

This holds true when $k=2$ (and also $k=1$) by Proposition \ref{limit}. 
We are interested in the remaining cases. 
Suppose that $k \geq 3$. The affirmative answer to Question \ref{question} will tell us that 
for any $1 \leq j \leq k$, 
the $(r-j)$th associated Buchsbaum-Rim multiplicity $e^{r-j}(C)$ 
is determined by the polynomial
\begin{equation}
\sum_{i=1}^{k} {q-(r-i)p-1 \choose r-i} g_{r-i+1}(p). \label{expectedpoly} 
\end{equation}
Then we will be able to describe the multiplicity $e^{r-j}(C)$ 
as a sum of the ordinary Buchsbaum-Rim 
multiplicities of a direct sum of at most $(r-j)$ cyclic modules in the same manner.
Here we would like to record the expected formula. 
Note that the polynomial $g_{r-i+1}(p)$ defined in (\ref{polyg}) is of the form
$$g_{r-i+1}(p)=\frac{1}{(d+i-1)!} \sum_{\stackrel{A \subset [r]}{{}^{\sharp}A=i-1}} e(L_A) \cdot p^{d+i-1}+\mbox{(lower terms)}$$
where 
$
\displaystyle{
L_A:=\bigg( 
R\Big/
\Big[\sum_{s \in [r] \setminus A} I_s \Big]
\bigg)
\oplus 
\bigoplus_{j \in A}
\bigg(
R \Big/ \Big[ \sum_{s \in [r] \setminus A} I_s+I_j \Big]
\bigg)
}
$. 
We put
$$E_{r-i+1}:=E_{r-i+1}(I_1, \dots , I_r):=\sum_{\stackrel{A \subset [r]}{{}^{\sharp}A=i-1}} e(L_A).$$
Then for any $1 \leq j \leq k$, the coefficient of $p^{d+j-1}q^{r-j}$ in the polynomial (\ref{expectedpoly}) is 
$$\sum_{i=1}^j \frac{E_{r-i+1}}{(d+i-1)!(r-i)!} {r-i \choose r-j} \big( -(r-i) \big)^{j-i}. $$
If Question \ref{question} is affirmative, 
then the above coefficient coincides with 
$$\frac{e^{r-j}(C)}{(d+j-1)!(r-j)!} $$
so that we can get the formula for $e^{r-j}(C)$.   
Therefore, we can ask the following. 

\begin{Question}\label{conj}
Under the same notation as above, does the formula  
$$e^{r-j}(R/I_1 \oplus \dots \oplus R/I_r)=\sum_{i=1}^j {d+j-1 \choose j-i} \big( -(r-i) \big)^{j-i} E_{r-i+1}(I_1, \dots , I_r)$$
hold true?
\end{Question}

This is affirmative when $j=1$ (\cite[Theorem 1.3]{Ha}) and $j=2$ (Theorem \ref{main}). 
Note that the affirmative answer to Question \ref{question} for some $k$ implies the affirmative one to Question \ref{conj} for any $1 \leq j \leq k$. 
 



\begin{thebibliography}{99}

\bibitem{Bi} 
C. Bivi\`a-Ausina, 
The integral closure of modules, Buchsbaum-Rim multiplicities and Newton polyhedra, 
J. London Math. Soc. (2) 69 (2004), no. 2, 407--427


\bibitem{BR2}
D. A. Buchsbaum and D. S. Rim, 
A generalized Koszul complex. II. Depth and multiplicity, 
Trans. Amer. Math. Soc. 111 (1964), 197--224

\bibitem{CLU}
C.-Y. Jean Chan, J.-C. Liu, B. Ulrich, 
Buchsbaum-Rim multiplicities as Hilbert-Samuel multiplicities,  
J. Algebra 319 (2008), no. 11, 4413--4425


\bibitem{Ha}
F. Hayasaka, 
A computation of Buchsbaum-Rim functions of two variables in a special case, 
Rocky Mountain J. Math. 46 (2016), 1547--1557

\bibitem{Ha2}
F. Hayasaka, 
A formula for the associated Buchsbaum-Rim multiplicity of a direct sum of cyclic modules, to appear in J. Pure Appl. Algebra


\bibitem{J}
E. Jones, 
Computations of Buchsbaum-Rim multiplicities, 
J. Pure Appl. Algebra 162 (2001), no. 1, 37--52

\bibitem{KR1} D. Kirby and D. Rees, 
Multiplicities in graded rings. I. The general theory. 
Commutative algebra: syzygies, multiplicities, and birational algebra (South Hadley, MA, 1992), 209--267, 
Contemp. Math., 159, Amer. Math. Soc., Providence, RI, 1994

\bibitem{KR2} D. Kirby and D. Rees, 
Multiplicities in graded rings. II. Integral equivalence and the Buchsbaum-Rim multiplicity. 
Math. Proc. Cambridge Philos. Soc. 119 (1996), no. 3, 425--445

\bibitem{KT1}
S. Kleiman and A. Thorup, 
A geometric theory of the Buchsbaum-Rim multiplicity, 
J. Algebra 167 (1994), no. 1, 168--231

\bibitem{KT2}
S. Kleiman and A. Thorup, 
Mixed Buchsbaum-Rim multiplicities,  
Amer. J. Math. 118 (1996), no. 3, 529--569

\end{thebibliography}
\end{document}